\newtheorem{theorem}{Theorem}[section]
\newtheorem{thm}[theorem]{Theorem}
\newtheorem{proposition}[theorem]{Proposition}
\newtheorem{claim}[theorem]{Claim}
\newtheorem{lemma}[theorem]{Lemma}
\newtheorem{question}[theorem]{Question}
\theoremstyle{definition}
\newtheorem{definition}[theorem]{Definition}
\newtheorem{example}[theorem]{Example}
\newtheorem{remark}[theorem]{Remark}
\newcommand{\NN}{{\mathbb{N}}}
\newcommand{\QQ}{{\mathbb{Q}}}
\newcommand{\ZZ}{{\mathbb{Z}}}
\newcommand{\sub}{\subseteq}
\newcommand{\sN}[1]{_{#1\in \NN}}
\newcommand{\uhr}[1]{\! \upharpoonright_{#1}}
\newcommand{\bi}{\begin{itemize}}
\newcommand{\ei}{\end{itemize}}
\newcommand{\bc}{\begin{center}}
\newcommand{\ec}{\end{center}}
\newcommand{\ex}{\exists}
\newcommand{\la}{\langle}
\newcommand{\ra}{\rangle}
\newcommand{\n}{\noindent}
\newcommand{\vsp}{\vspace{6pt}}
\newcommand{\sss}{\sigma}
\newcommand{\aaa}{\alpha}
\newcommand{\lland}{\, \land \, }
\newcommand \seq[1]{{\left\langle{#1}\right\rangle}}
\newcommand{\ol}{\overline}
\newcommand{\RA}{\Rightarrow}
\newcommand{\LA}{\Leftarrow}
\newcommand{\rapf}{\n $\RA:$\ }
\newcommand{\lapf}{\n $\LA:$\ }
\DeclareMathOperator{\Ext}{\mathrm{Ext}}
\DeclareMathOperator{\Hom}{\mathrm{Hom}}
\numberwithin{equation}{section}
\renewcommand{\hat}{\widehat}
\begin{document}

\title{Logic Blog 2021}

 \author{Editor: Andr\'e Nies}

\maketitle


\setcounter{tocdepth}{1}
\tableofcontents

 {
The Logic   Blog is a shared platform for
\bi \item rapidly announcing  results and questions related to logic
\item putting up results and their proofs for further research
\item parking results for later use
\item getting feedback before submission to  a journal
\item fostering collaboration.   \ei

Each year's   blog is    posted on arXiv  2-3 months after the year has ended.
\vsp
\begin{tabbing} 

 \href{https://arxiv.org/abs/2101.09508}{Logic Blog 2020} \ \ \ \   \= (Link: \texttt{http://arxiv.org/abs/2101.09508})  \\
 
 \href{http://arxiv.org/abs/2003.03361}{Logic Blog 2019} \ \ \ \   \= (Link: \texttt{http://arxiv.org/abs/2003.03361})  \\

 \href{http://arxiv.org/abs/1902.08725}{Logic Blog 2018} \ \ \ \   \= (Link: \texttt{http://arxiv.org/abs/1902.08725})  \\
 
 \href{http://arxiv.org/abs/1804.05331}{Logic Blog 2017} \ \ \ \   \= (Link: \texttt{http://arxiv.org/abs/1804.05331})  \\
 
 \href{http://arxiv.org/abs/1703.01573}{Logic Blog 2016} \ \ \ \   \= (Link: \texttt{http://arxiv.org/abs/1703.01573})  \\
 
  \href{http://arxiv.org/abs/1602.04432}{Logic Blog 2015} \ \ \ \   \= (Link: \texttt{http://arxiv.org/abs/1602.04432})  \\
  
  \href{http://arxiv.org/abs/1504.08163}{Logic Blog 2014} \ \ \ \   \= (Link: \texttt{http://arxiv.org/abs/1504.08163})  \\

   \href{http://arxiv.org/abs/1403.5719}{Logic Blog 2013} \ \ \ \   \= (Link: \texttt{http://arxiv.org/abs/1403.5719})  \\

    \href{http://arxiv.org/abs/1302.3686}{Logic Blog 2012}  \> (Link: \texttt{http://arxiv.org/abs/1302.3686})   \\

 \href{http://arxiv.org/abs/1403.5721}{Logic Blog 2011}   \> (Link: \texttt{http://arxiv.org/abs/1403.5721})   \\

 \href{http://dx.doi.org/2292/9821}{Logic Blog 2010}   \> (Link: \texttt{http://dx.doi.org/2292/9821})  
     \end{tabbing}

\vsp

\n {\bf How does the Logic Blog work?}

\vsp

\n {\bf Writing and editing.}  The source files are in a shared dropbox.
 Ask Andr\'e (\email{andre@cs.auckland.ac.nz})  in order    to gain access.

\vsp

\n {\bf Citing.}  Postings can be cited.  An example of a citation is:

\vsp

\n  H.\ Towsner, \emph{Computability of Ergodic Convergence}. In  Andr\'e Nies (editor),  Logic Blog, 2012, Part 1, Section 1, available at
\url{http://arxiv.org/abs/1302.3686}.}

%

%

 
The logic blog,  once it is on  arXiv,  produces citations e.g.\ on Google Scholar.
%

%

  \part{Group  theory and its connections to  logic}
   
   \section{Berdinsky and Nies: Abelian groups of rank 2}

     A set $\{G_i\}_{i \in I}$ of torsion--free    
     groups $\neq 0$ is said to be a rigid system 
     if $\mathrm{Hom}(G_i, G_j)$ is a subgroup
     of $\mathbb{Q}$ if $i = j$ or $0$ if 
     $i \neq j$. The groups in a rigid system 
     are necessarily indecomposable. 
     A group  $G$ is called rigid if 
     the singleton $\{G\}$ is rigid.  
 \subsection{Background}
  \begin{lemma}[Lemma 4.6 in \cite{Fuchs:15}]  
  \label{Lemma_4.6_Fuchs}	  
  	  For every $r \geqslant 2$, there 
  	  exists a rigid system of 
  	  $2^{\aleph_0}$ torsion--free groups 
  	  $\{A_i \}_{i \in I}$ of rank $r$ such that
  	  $\mathrm{End}\, A_i \cong (\mathbb{Z}, +)$ 
  	  for each $i \in I$. The groups 
  	  are homogeneous of type  
  	  ${\bf t}_0 = (0,\dots,0,\dots)$,  (i.e., every rank 1 subgroup is free). 
  \end{lemma}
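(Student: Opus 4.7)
The plan is a Corner-style realization of each $A_i$ as a subgroup of $\mathbb{Q}^r$ lying strictly between the canonical free group $F = \bigoplus_{j=1}^{r} \mathbb{Z} e_j$ and the full rational span $\mathbb{Q}^r$; the index $i$ will encode a sequence of $p$-adic integers whose $\mathbb{Q}$-linear independence properties obstruct all non-scalar endomorphisms and all cross-homomorphisms, so that the endomorphism rings and $\mathrm{Hom}$-groups behave as stated.

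Concretely, I would fix a countable set of primes $p_1 < p_2 < \cdots$ and, for each $i$ in an index set $I$ of size $2^{\aleph_0}$, choose $\pi_i^{(n)} \in \mathbb{Z}_{p_n}$ so that $1, \pi_i^{(n)}, \ldots, (\pi_i^{(n)})^{r-1}$ are $\mathbb{Q}$-linearly independent inside $\mathbb{Q}_{p_n}$. Taking integer approximations $s_{n,i,k}$ with $s_{n,i,k} \equiv s_{n,i,k-1} \pmod{p_n^{k-1}}$ and $s_{n,i,k} \to \pi_i^{(n)}$ $p_n$-adically, define $A_i$ as the subgroup of $\mathbb{Q}^r$ generated by $F$ together with the elements
\[
g_{n,i,k} \;=\; p_n^{-k}\bigl(e_1 + s_{n,i,k}\, e_2 + s_{n,i,k}^2\, e_3 + \cdots + s_{n,i,k}^{r-1}\, e_r\bigr).
\]
The congruence condition on the $s_{n,i,k}$ guarantees that $p_n\, g_{n,i,k+1} - g_{n,i,k} \in F$, so the generators form a coherent Pr\"ufer-like tower at each prime $p_n$.

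The verification then splits naturally. Rank $r$ is immediate from $F \leq A_i \leq \mathbb{Q}^r$. For the type $\mathbf{t}_0$ claim, note that all extra $p_n$-divisibility is concentrated along the single $p_n$-adically irrational direction spanned by $(1, \pi_i^{(n)}, \ldots, (\pi_i^{(n)})^{r-1})$, so any fixed nonzero $a \in A_i$ falls off that tower from some stage on and hence has bounded $p$-height at every prime $p$, i.e.\ the characteristic of $a$ is $(0,0,\dots)$. For the endomorphism and rigidity claims, any $\phi \colon A_i \to A_j$ extends uniquely to a $\mathbb{Q}$-linear $M \in M_r(\mathbb{Q})$ on $\mathbb{Q}^r$; demanding $M(g_{n,i,k}) \in A_j$ for all $k$ translates, after passage to $\mathbb{Z}_{p_n}$, into $M$ sending the line through $(1, \pi_i^{(n)}, \ldots, (\pi_i^{(n)})^{r-1})$ into the analogous line for $j$. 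When $i = j$, the $\mathbb{Q}$-linear independence of the powers of $\pi_i^{(n)}$ forces $M$ to be scalar, and $M(F) \subseteq F$ forces the scalar to lie in $\mathbb{Z}$; when $i \neq j$, the genericity of the chosen sequences forces $M = 0$.

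The main obstacle is the initial choice of the $2^{\aleph_0}$ sequences $(\pi_i^{(n)})_{n,i}$: they must simultaneously satisfy independence conditions indexed by all non-scalar $M \in M_r(\mathbb{Q})$ and all pairs $i \neq j$ from $I$. The key observation is that for each such $M$ and each pair $(i,j)$ the set of forbidden $p$-adic values is a proper $\mathbb{Q}_{p_n}$-algebraic subset of $\mathbb{Z}_{p_n}$, hence meager and Haar-null. Since $M_r(\mathbb{Q})$ is countable, a standard Baire-category or measure-theoretic argument on $\prod_n \mathbb{Z}_{p_n}$ produces a perfect family of continuum-many admissible sequences, yielding the required index set $I$.
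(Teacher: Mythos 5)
Your overall architecture (subgroups of $\mathbb{Q}^r$ squeezed between the free group $F$ and its divisible hull, extra divisibility concentrated along a $p$-adically irrational direction, endomorphisms controlled by extending to $M_r(\mathbb{Q})$ and passing to $\mathbb{Z}_p$, continuum-many indices via a Mycielski-style genericity argument) is a legitimate route and genuinely different from the one in the text, which follows Fuchs: a single prime $p$, the group $A_\pi=\langle a_1,a_2;x_1,x_2,\dots\rangle$ with $x_n=p^{-n}(a_1+\pi_n a_2)$, an explicit normal form, and a transcendence (or ``no quadratic relation'') hypothesis on $\pi$ in place of genericity; the text moreover only treats $r=2$ in detail and defers rigidity and $\mathrm{End}\,A\cong\mathbb{Z}$ to Fuchs, proving only the type claim. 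However, your verification of homogeneity of type $\mathbf{t}_0$ contains a genuine error. Bounded $p$-height at every prime does \emph{not} give characteristic $(0,0,\dots)$: you also need height $0$ at all but finitely many primes, and this is exactly where your multi-prime design bites. Take $a=e_1+e_2+\cdots+e_r\in F$. Then $a$ is divisible by $p_n$ in $A_i$ precisely when $\pi_i^{(n)}\equiv 1 \pmod{p_n}$ (for then $a-p_ng_{n,i,1}\in p_nF$), an event of Haar probability $1/p_n$, independently over $n$. If your prime set satisfies $\sum_n 1/p_n=\infty$ (e.g., all primes), the second Borel--Cantelli lemma shows that a Haar-random choice of $(\pi_i^{(n)})_n$ makes $a$ divisible by infinitely many $p_n$ almost surely; the pure rank-$1$ subgroup containing $a$ then contains $a/p_n$ for infinitely many $n$, so it is not finitely generated and hence not free. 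Thus the generic choice invoked in your last paragraph can destroy the type claim rather than secure it. The repair is either to use a single prime (as in the text, where type $\mathbf{t}_0$ reduces to finiteness of $p$-height, proved via the normal form and the $p$-purity of $\langle a_2\rangle$), or to take a sparse prime set with $\sum_n 1/p_n<\infty$ and add the countably many conditions ``$a$ has height $0$ at almost all $p_n$'' ($a\in F$ primitive) to your genericity scheme.

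A second, smaller gap: requiring only that $1,\pi_i^{(n)},\dots,(\pi_i^{(n)})^{r-1}$ be $\mathbb{Q}$-linearly independent does not force $M$ to be scalar. Already for $r=2$ this condition says only $\pi\notin\mathbb{Q}$, and a quadratic irrational $\pi\in\mathbb{Z}_p$ admits a non-scalar rational matrix preserving the line through $(1,\pi)$ (multiplication by $\pi$ on $\mathbb{Q}(\pi)$ in the basis $1,\pi$), so $\mathrm{End}\,A$ would be strictly larger than $\mathbb{Z}$. The eigenvector computation on your moment curve produces polynomial relations in $\pi_i^{(n)}$ of degree up to $2(r-1)$, so you need linear independence of $1,\pi,\dots,\pi^{2r-2}$ --- this is precisely why the text insists that $\pi$ satisfy no quadratic equation when $r=2$. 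Your closing genericity argument, which quantifies over all non-scalar $M\in M_r(\mathbb{Q})$, would absorb this automatically, but the hypothesis as stated in your construction is too weak to support the deduction ``the independence of the powers forces $M$ scalar.''
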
	
  \begin{proof} 
  	 Each of these groups  is  
  	 defined as a subgroup of 
  	 $$\mathbb{Z}[1/p] a_1 \oplus \dots 
  	  \oplus \mathbb{Z}[1/p] a_r.$$ 
    Here we consider only the case $r=2$. 
   Following Fuchs,  choose a $p$--adic unit 
    $$ 
        \pi = s_0 + s_1 p + \dots + s_n p^n + \dots,   
    $$
    where $0 < s_0 <p$ and $0 \leqslant s_i < p$ for 
    all $i>0$, which is transcendental over 
    $\mathbb{Z}$ (or, at least, satisfies no quadratic equation).      Let   for $n \geqslant 1$
    \begin{equation}
    \label{formula_for_x_n}   
       x_n = p^{-n} (a_1 + \pi_n a_2) \in G, 
    \end{equation}	
    where $\pi_n = s_0 + s_1 p + \dots + s_{n-1} p^{n-1}$.  
    The subgroup $A = A_\pi \leqslant G$ is defined by 
    $$
       A_\pi = \langle a_1, a_2; x_1, x_2, \dots x_n, \dots \rangle.
    $$
    The group $A$ is of rank $2$. It is then showed 
    that $G$ is rigid and 
    $\mathrm{End}\,A \cong \mathbb{Z}$. 
    We skip the proof of this fact. 
    Now we prove that $A$ is of type ${\bf t}_0$. 
    Since $p x_{i+1} = x_i + s_i a_2$ for every 
    $i\ge 1$, an element of $A$ not in 
    $B = \langle a_1, a_2 \rangle$ has the form 
    $k x_n + k_2 a_2$ for some integers $k, k_2$ 
    with $\gcd (p,k)=1$, and for 
    some $n \geqslant 1$.  
    
        
    Furthermore, if 
    $p^m (k x_n + k_2 a_2) = \ell a_2$ for 
    $\ell \in \mathbb{Z}$, then it follows from 
    \eqref{formula_for_x_n} that the coefficient  
    of $a_1$ must be zero. Therefore, $\langle a_2 \rangle$ 
    is a $p$--pure subgroup of $A$. 
    Assume that $p^{-n} (k_1 a_1 + k_2 a_2) \in A$ for
    all $n > 0$; here $k_1 \neq 0$, since 
    $p^n | a_2$ is impossible; the latter is 
    because $\langle a_2 \rangle$ 
    is $p$--pure. Then also $p^{-n} (k_1 a_1 + k_2 a_2 - 
    k_1 p^n x_n) \in A$, so by \eqref{formula_for_x_n} 
    and the purity of $\langle a_2 \rangle$ we get 
    that $p^n | k_1 - k_2 \pi_n$ for all $n$. This 
    implies that $\pi$ is a rational number, a 
    contradiction. So $A$ is homogeneous of 
    type ${\bf t}_0$. 
  \end{proof}	
 \begin{question} 
 	Is there some $\pi$ not satisfying any quadratic equation with integer coefficients such that $A$ is FA--presentable?  
 \end{question}	
 \subsection{Group extensions and   cocycles} 
 
 For more detail on this see the next section, with Lupini.

  Let $0\to A\xrightarrow{\mu} E \xrightarrow{\nu}C \to 0$
  be a short exact sequence of abelian groups. 
  The group $E$ is called an extension of $C$ 
  by $A$.  
  Let 
  	      $f : C \times C \to A$ be a cocycle 
          on $C$ to $A$, that is a  map satisfying 
          the properties: 
          $$f(u,0) = 0, f(u,v)=f(v,u), f(u,v)+f(u+v,w)=f(u,v+w)+f(v,w)$$
          for all $u,v,w \in C$.

          The 
          extension of $C$ by $A$ corresponding to 
          $f$ is constructed as the set of all pairs 
          $(u, a) \in C \times A$ with the operation:
          $$(u,a) + (v,b) : = (u+ v, a+ b + f(u,v)).$$
          Clearly, if $A$ and $C$ are FA--presentable and the graph of 
          $f$ is FA--recognizable for appropriate such presentations, then  the corresponding extension           is FA--presentable.

  To obtain a cocycle for a particular extension $E$ of $A$ by $C$, one chooses a canonical transversal set $T\subseteq E$, i.e., $T$ meets each coset of $\mu A$ on a single element, and $0 \in T$. Then the surjection $\nu: E \rightarrow C$ restricted 
  on $T$ is a bijection.  
  Let $\phi \colon C \to T$ be a 
  bijection for which $\nu \circ \phi = id$.    
  We call $\phi$ a transversal map. 
  The corresponding cocycle $f\colon C \times C \to A$ is given by \[f(u,v) = \phi(u) + \phi(v) - \phi(u+v).\] This intuitively measures the deviation of $\phi$ from being a homomorphism, or in other words, the deviation of $T$ from being a subgroup of $E$. 
  
  \subsection{Groups $A_\pi$ as Extensions of $\mathbb Z$ by $\mathbb Z[1/p]$}
  
     Let us define a short exact sequence: $$0\to\mathbb{Z}\xrightarrow{\mu}A_\pi \xrightarrow{\nu}
        \mathbb{Z}[1/p]\to 0$$
     as follows.
     An embedding $\mu:\mathbb{Z}\rightarrow A_\pi$ is defined by the equation: $\mu(m)=m a_{2}$, $m\in\mathbb{Z}$.
     Recall that each $y\in A_\pi$, which is not in $\langle a_{1},a_{2}\rangle$, admits a unique form $kx_{n}+ma_{2}$ for 
     $n \geqslant 1$ and some integers
     $m$ and $k$ for which $\gcd(p,k)=1$, see Lemma 
     \ref{Lemma_4.6_Fuchs}.
     A surjection $\nu:A_\pi \rightarrow\mathbb{Z}[1/p]$ 
     is defined as:
     \begin{itemize}
     	\item{if $y=ka_{1}+ma_{2}$, then $\nu(y)=\nu(ka_{1}+ma_{2})=k$;}
     	\item{if $y\notin\langle a_{1},a_{2}\rangle$, then $\nu(y)=\nu(kx_{n}+ma_{2})=\frac{k}{p^{n}}$.}
     \end{itemize}
     The map $\nu$ is a homomorphism (it can be verified directly). The same surjective homomorphism $\nu$ 
     can be defined as follows. 
     Each element $y\in A_\pi$ admits the following 
     unique normal form: $y=ma_{1}+ka_{2}+\sum_{i=1}^{n}r_{i}x_{i}$, where $0\leqslant r_{i}<p$ for $i=1,\dots,n$.
     This follows from the identity $px_{i+1}=x_{i}+s_{i}a_{2}$.
     The map $\nu : A_\pi \rightarrow \mathbb{Z}[1/p]$ can be defined as: $\nu(y)=m+\frac{r_{1}}{p}+\frac{r_{2}}{p^{2}}+\dots+\frac{r_{n}}{p^{n}}$. Clearly,  it gives exactly the same surjective homomorphism from $A_\pi$ to $\mathbb{Z}[1/p]$ as above. 
  
     Note that a cocycle depends on the choice of a transversal map $\phi$. Below we consider different
     transversal maps and the corresponding cocycles.   

  {\bf First version:}    

 Below we will describe an appropriate cocycle for $A_\pi$.  Recall that $\pi= \sum_n s_n p^n$ is a fixed $p$-adic unit.
 As  a  canonical transversal set $T$ choose the set of elements of the form $m a_1 + \sum_{i = 1}^n r_i x_i$,  where $ m \in \mathbb Z$, $n \in \mathbb N$, and $0 \le r_i < p$ for each 
 $i=1,\dots,n$. 
 Let us define a transversal map 
 $\phi: \mathbb{Z}[1/p] \rightarrow A_\pi$.  
 For $u= m.r_1 r_2 \ldots r_n \in \mathbb Z[1/p]$,   
 let $\phi(u)= m a_1 + \sum_{i = 1}^n r_i x_i$.  Given $u,v \in \mathbb Z[1/p]$ one can write   $u= m.r_1 r_2 \ldots r_n \in \mathbb Z[1/p]$ and $v= k. t_1 \ldots t_\ell$. One
 can calculate  the corresponding cocycle
 $f(u,v)= \phi(u)+\phi(v)-\phi(u+v)$ as follows.   
 Assuming that $n = \ell$, by adding a tail with zeros  when necessary, we obtain: 
    
    \[ f(u,v) = \sum_{0 \le  i \le n-1 \, \land \,  i+1\in F} s_i,  \]
  where $F$  is the set of positions where one has  is carry when performing  the usual addition algorithm,   using that 
  $p x_{i+1} = x_{i} + s_{i} a_2$ for every 
    $i\ge 2$ and $px_1 = a_1 + s_{0}a_2$.  For example,  if $p=3$, $u= 0.112$ and $v= 0.211$, then $F= \{1,2,3\}$. 
    
    
    Even in the simple case that all $r_i= 1$ (and hence the $p$-adic number $\pi$ is a rational), this doesn't lead naturally to an FA presentation. We assume that $\mathbb{Z}[1/p]$ is represented similar to \cite{Nies.Semukhin:09}. Then, to compute $f$, an FA  would have  to count the number of $i$ satisfying the given condition $ r_i + t_i \ge p$.

   {\bf Second version:} 
   
   Alternatively, a transversal map $\phi:\mathbb{Z}[1/p]\rightarrow A_\pi$ can be defined as:
   \begin{itemize}
   \item{if $z=k$ for some integer $k$, we define $\phi(z)=ka_{1}$;}
   \item{if $z=\frac{k}{p^{n}}$ for $n \geqslant 1$ and $\gcd(k,p)=1$, we define $\phi(z)=kx_{n}$.}
   \end{itemize}
   The corresponding cocycle 
   $f: \mathbb{Z}[1/p] \times \mathbb{Z}[1/p] \rightarrow 
    \mathbb{Z}$  
   defined by the equation 
   $\mu f (u,v) = \phi(u) + \phi(v) - \phi(u+v)$
   is then as follows: 
   \begin{itemize}
   \item{if $u,v \in \mathbb{Z}$, then 
         $\mu f(u,v)= ua_1 + v a_1 - (u+v)a_1 =0$. 
         Therefore, $f(u,v)=0$.}
   \item{if $u \in \mathbb{Z}$ and 
   	        $v = \frac{k}{p^n}$ for $\gcd(k,p)=1$ and 
   	         $n \geqslant 1$, then $u+v = \frac{up^n+ k}{p^n}$. 
         Therefore, $\mu f(u,v)=u a_1 + k x_n - 
         (up^n+ k) x_n = u a_1 - u (a_1 + \pi_n a_2)  =
         -u\pi_n a_2$. Therefore, $f(u,v)=-u \pi_n$.}
    \item{if $u= \frac{k_1}{p^m}$ and 
             $v= \frac{k_2}{p^n}$ for $1 \leqslant m<n$, 
          then $\frac{k_1}{p^m} + \frac{k_2}{p^n} = 
           \frac{k_1 p^{n-m} + k_2}{p^n}$. 
       Therefore, $\mu f(u,v) = k_1 x_m + k_2 x_n - 
        (k_1 p^{n-m} + k_2)x_n = k_1 x_m - k_1 p^{n-m} x_n
        = k_1 p^{-m} (a_1 + \pi_m a_2) - k_1 p^{-m}
         (a_1 + \pi_n a_2) = \frac{k_1}{p^m} 
         (\pi_m - \pi_n) a_2= u (\pi_m - \pi_n) a_2$.
          Therefore, $f(u,v) = u (\pi_m - \pi_n)$.}
     \item{if $u=\frac{k_1}{p^n}$ and $v = \frac{k_2}{p^n}$ for
     	$n \geqslant 1$, then 
     	$\frac{k_1}{p^n} + \frac{k_2}{p^n} = 
     	 \frac{p^m k}{p^n}= \frac{k}{p^{n-m}}$ 
     	 either for some $0 \leqslant m < n$ and 
     	 $k$ for which $\gcd(k,p)=1$ or $m=n$, that is, $\frac{k_1}{p^n} + \frac{k_2}{p^n}  = k$.
     	
     	 In the first case we have: 
     	 $\mu f(u,v) = k_1 x_n + k_2 x_n - k x_{n-m}=
     	 (k_1 +k_2)p^{-n}(a_1 + \pi_n a_2) - 
     	 k p^{m-n} (a_1 + \pi_{n-m} a_2) =
     	 (u+v)(\pi_n -  \pi_{n-m})a_2$, 
     	 so $f(u,v) =(u+v)(\pi_n -  \pi_{n-m})$. 
         
         In the second case we have: 
         $\mu f(u,v) = k_1 x_n + k_2 x_n - k a_1 =
         (k_1 + k_2) p^{-n}(a_1 + \pi_n a_2) - k a_1 =
         (u+v)\pi_n a_2$, so $f(u,v) = (u+v) \pi_n$.}   	 
   \end{itemize} 

   \section{Lupini and Nies: Extensions of abelian groups}

Eilenberg and MacLane~\cite{Eilenberg.MacLane:42} were   among the first to study extensions of abelian groups via the cohomological notions of cocycles and coboundaries. Here  we will expose the  algorithmic content of some of their results. Thereafter,  we use this to show that \bc $\ZZ_p \cong \Ext(\ZZ(p^\infty), \ZZ)$ and $\ZZ_p/\ZZ \cong \Ext(\ZZ[1/p], \ZZ)$ \ec via isomorphisms that are computable in the sense of the Weihrauch setting of computable analysis.  (As usual, $\ZZ_p$   denotes the group of $p$-adic integers, and    $\ZZ(p^\infty)= \ZZ[1/p]/ \ZZ$ the Pr\"ufer group for $p$.)
\subsection{Some background and preliminaries} All groups will be  abelian. Given groups $A,C$, an \emph{extension} of $C$ by $A$ is an exact sequence 
$0 \to A \to E \to C \to 0$. Extensions can be described   by \emph{cocycles}, that is,  functions $f \colon C \times C \to A$ that are symmetric and satisfy the  condition that \begin{equation} \label{eqn:coc} f(u,v)+ f(u+v, w)=f(v,w) + f(u,v+w).\end{equation} By these conditions,  on $C \times A$, the operation 
\[ (u,a) + (v,b) : = (u+ v, a+ b + f(u,v))\]
defines an abelian group $E= E_f$. 
This operation is commutative because $f$ is symmetric.  The inverse of $(u,a)$ is $(-u.-a-f(u,-u))$. For associativity,   if we calculate $[(u,a) + (v.b)]+ (w,c)$, the ``correcting term" in the second component on the right side is $f(u,v) + f(u+v,w)$. If we calculate $(u,a) + [(v.b)+ (w,c)]$, the correcting term is $f(v,w)+ f(u,v+w)$.

  The  cocycles from $C$ to $A$  form a group under addition, which  is denoted $Z(C,A)$. A \emph{coboundary} is a cocycle $g$ of the form $g(u,v)= \varphi(u)+ \varphi(v)- \varphi(u+v)$ where $\varphi \colon C \to A$ is  any   function with $\varphi(0)=0$. The  group of coboundaries is denoted $B(C,A)$.  These correspond to splitting extensions (i.e., direct products in the abelian setting). Two cocycles are called equivalent if their difference is a coboundary.  The group of extensions of $C$ by $A$ is \bc $\Ext(C,A)= Z(C,A)/ B(C,A)$. \ec   
  Note that one can add to $f$   a constant in $A$ without invalidating the condition~(\ref{eqn:coc}). Also,  every constant  function   $C\times C \to A$ is  a co-boundary. So we may assume w.l.o.g.\  that $f$ is normalized in the sense that $f(0,0)= 0$. By~(\ref{eqn:coc}) setting $u=v=0$ it follows that  $f(0,w)=f(w,0)= 0$ for each $v$. If $f$ is normalized,   one obtains an exact sequence 
$0 \to A \to E \to C \to 0$, $E= E_f$, where  the embedding $A\to E$ is given by $a \to(0,a)$, and the     projection map $E \to C$ is given by  $(c,a) \to c$. 

For detail see e.g.\  Fuchs~\cite{Fuchs:15} (who calls these objects   ``extensions of $A$ by $C$",  but uses the same order, first  $C$ then $A$,  in the  notation).

 
 \subsection{Computable analysis}
 If $A,C$  are computable groups (thus countable, in particular),  the  computable cocycles form a subgroup of $Z(C,A)$. They yield exact sequences as above where $E$ and also the connecting maps  are computable.  
 
 One of the questions we     ask is  whether the groups $Z(C,A)$ and  $\Ext(C,A)$, which may well be uncountable, are  computable in the sense of  computable analysis. We review the   setting going back to Weihrauch, based on the tutorial \cite{Brattka.Hertling.ea:08}, i.p.\  their Def.\ 4.2.

 Let $\Sigma$ be an alphabet (which may be infinite). A representation  of a set $X $ is an onto map $\delta \colon \sub \Sigma^\omega \to X$. One calls $(X, \delta,\Sigma)$ a represented set. If $\delta(\aaa)=x$ one calls $\aaa$ a $\delta$-name for $x$.  A function $F $ from $X$ to $X'$ is called computable if from any name for $x$  one can compute a name for $F(x)$. The formal definition is as follows.
 \begin{definition} \label{df:names} Let $(X, \delta, \Sigma)$ and $X', \delta', \Sigma')$ be represented sets. A function $F \colon  X \to X'$ is called \emph{computable} if there is an oracle Turing machine $M$ using $\delta$-names as oracles such that for each $\delta$-name $\aaa$, one has that $M^\aaa$ is a $\delta'$-name,  and 
 \bc $F(\delta(\aaa))  = \delta'(M^\aaa))$. \ec\end{definition}
 
 \begin{example} \label{ex:Zp} Let $X= \ZZ_p$ for some prime $p$. Let $\Sigma= \ZZ$. For $\aaa \in \Sigma^\omega$, let $\delta(\aaa)= \sum_n \aaa(n)p^n$.
 \end{example}
  \subsection{A description of $\Ext(H,A)$ by homomorphism groups}
The following important lemma of Eilenberg and MacLane elaborates on the  easy fact that each extension of a free group  $F$ splits, or in other words, every cocycle on $F$ is a coboundary.

\begin{lemma}[Lemma 7.3 in~\cite{Eilenberg.MacLane:42}] {\rm  Let $F$ be a free group, with generators $z_\aaa$. Let the cocycle  $h \in Z(F,A)$ be normalized. Then there is a function $\varphi \colon F \to A$ such that \begin{equation} \label{hhh} h(x,y)=  \varphi(x+y)  -\varphi(x) - \varphi(y)  , \end{equation} 
that is,    $h$ is a coboundary via  $-\varphi$.
Moreover, one can ensure  that $\varphi(0)= 0$ and $\varphi(z_\aaa)=0$ for each $\aaa$. 

 The \emph{computable setting} is that  $F$ is computable, the map $\aaa \to z_\aaa $ is computable and has computable range, and $h$ is computable. Then $\varphi$ is computable as well. }
 \end{lemma}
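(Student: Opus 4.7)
The plan is to exploit the freeness of $F$ to build a group homomorphism section $s \colon F \to E_h$ of the projection $\pi \colon E_h \to F$, $(x,a) \mapsto x$, and then to read off $\varphi$ from the second coordinate of $s(x)$.

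First, I would set $s(z_\aaa) = (z_\aaa, 0)$ for every generator. Since $F$ is free on the $z_\aaa$, this choice extends uniquely to a group homomorphism $s \colon F \to E_h$. Concretely, each $x \in F$ has a unique finite representation $x = \sum_{i=1}^k n_i z_{\aaa_i}$ with distinct indices and $n_i \in \ZZ$, and one sets
\[ s(x) = \sum_{i=1}^k n_i \, (z_{\aaa_i}, 0), \]
computed inside $E_h$. Because $\pi$ is a homomorphism and $\pi\circ s$ agrees with $\mathrm{id}_F$ on the generating set, $\pi(s(x)) = x$, so $s(x) = (x, \varphi(x))$ for a well-defined function $\varphi \colon F \to A$. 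Unpacking the homomorphism property of $s$ via the addition rule of $E_h$ gives
\[ (x+y,\, \varphi(x+y)) = s(x+y) = s(x)+s(y) = (x+y,\, \varphi(x)+\varphi(y)+h(x,y)), \]
so $h(x,y)=\varphi(x+y)-\varphi(x)-\varphi(y)$ as required. The normalizations $\varphi(0)=0$ and $\varphi(z_\aaa)=0$ hold by construction, using that $h$ is normalized.

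For the effective statement, given a name for $x \in F$, one first finds its unique decomposition $x = \sum n_i z_{\aaa_i}$. This is effective from the hypotheses: the computability of $F$ provides decidable equality and computable $\pm$, and the computable range of the basis map $\aaa \mapsto z_\aaa$ lets us search for and recognize basis components. Once the decomposition is in hand, $s(x)$ is evaluated by iterating the group operation in $E_h$, which only requires evaluations of $h$ at pairs already produced. Since integer scalar multiplication reduces to the recursion $(n+1)(z,0) = n(z,0) + (z,0)$ (together with the analogous negative case, which involves the computable inverse in $E_h$ and hence finitely many values of $h$), every step is effective, and $\varphi(x)$ is extracted as the second coordinate.

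The only real subtlety, and what I would flag as the main obstacle, is the well-definedness of $s$ as a homomorphism: the direct ``read off from a recursion'' definition of $\varphi$ produces different bracketings depending on the order in which one assembles $x$, and one must check these agree. The clean workaround is to pass through the universal property of the free group $F$ and work in $E_h$ from the outset; the cocycle identity~(\ref{eqn:coc}) is then invoked exactly through the associativity of addition in $E_h$, which is precisely the content that guarantees $s$ exists and is a homomorphism.
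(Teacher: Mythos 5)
Your proposal is correct and follows essentially the same route as the paper: the paper's map $u(x)=\sum e_\aaa u'(z_\aaa)$ with $u'(x)=(x,0)$ is exactly your homomorphic section $s$ determined on generators by the universal property of $F$, and your $\varphi$ (the second coordinate of $s(x)$) coincides with the paper's $\varphi = u - u'$. The computability discussion likewise matches what the paper's construction makes evident.
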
 
 
 \begin{proof} Let $E$ be the extension given by a normalized cocycle $h$. Let $\beta \colon E \to F$ be the projection. Corresponding to $h$ there is a set of coset representatives. For $x \in F$ let $u'(x)$ be the representative  in $E$, so that $\beta(u'(x)) = x$.  Since  $h$ is normalized, we have  $u'(0)= 0$.  We have   
 \begin{equation} \label{eqn: hphi} h(x,y) = u'(x)+ u'(y) -u'(x+ y)    \end{equation}  (such a relation is called an addition table in~\cite{Eilenberg.MacLane:42}).  Note that  with the concrete construction of $E$ from $h$  and   the embedding $A \to E$ by $a \mapsto (0,a)$ as   above, we have $u'(x) = (x,0)$.

 Now, each  element $x$ of $F$ is given uniquely in the form $x=\sum e_\aaa z_\aaa$, with all but finitely many $e_\aaa \in \ZZ$ being $0$.  Define a new system of representatives by \bc $u(x)= \sum e_\aaa u'(z_\aaa)$.  \ec Since $F$ is freely generated by the $z_\aaa$,  the map $u$ is well-defined and a homomorphism. Now let $\varphi \colon F \to A$ be  the function such that \bc  $u(z)- u'(z) =\varphi(z)$. \ec  The required properties of $\varphi$ are evident from its construction. Since $u$ is a homomorphism  we have $0=u(x) + u(y)- u(x+y)$. So by  (\ref{eqn: hphi}) we have \bc $-h(x,y)= \varphi(x)+ \varphi(y)- \varphi(x+y)$. \ec This yields (\ref{hhh}).
 \end{proof}

 If $F$ is a free group,  $R \le F$, and $A$ is any group, by $\Hom(F| R,A)$ one denotes the subgroup of $\Hom(R,A)$ consisting of those homomorphisms that can be extended to $F$.  The computable setting is as above, with  in addition $R$ being  computable. 
 \begin{thm}[Theorem 10.1 in~\cite{Eilenberg.MacLane:42}]  \label{thm: EMcLane 10.1}
 Suppose that $H= F/R$ is a factor group of a free group $F$. Let  $g \colon H \times H \to R$ be a cocycle for the extension $0\to R\to F \to H \to 0$. Let $A$ be any group. There is an isomorphism
 \[  \Hom(R,A)/\Hom(F| R,A) \cong \Ext(H,A)  \]
 induced  by the homomorphism \bc $\Phi\colon \Hom(R,A)\to Z(H,A) $  given  by   $\Phi(\theta)= \theta\circ g$. \ec 
Its inverse is induced by a  homomorphism  $\Gamma\colon Z(H,A)\to \Hom(R,A)$. Thus  for $f\in Z(H,A)$  one has $\Phi(\Gamma(f))-f\in B(H,A)$.

  In the computable setting,  both $\Phi$ and $\Gamma$ can be evaluated by an oracle Turing machine  applied to $\theta \in \Hom(R,A)$, resp.\ $f\in Z(H,A)$ as  oracles. Since we can view elements of both sets as names in the sense of Def.\ \ref{df:names} (with $A= \Sigma$), this shows that the isomorphism and its inverse are computable.  
 \end{thm}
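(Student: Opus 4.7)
My approach is to verify separately that $\Phi$ descends to a well-defined map on the quotient, to construct $\Gamma$ using the preceding lemma, and to show that the two are mutual inverses modulo the stated subgroups; the computable setting is then essentially a uniformity check. For $\Phi$: given $\theta\in\Hom(R,A)$, the composition $\theta\circ g$ inherits symmetry from $g$, and applying $\theta$ to the cocycle identity for $g$ yields the cocycle identity for $\theta\circ g$, so $\Phi$ lands in $Z(H,A)$. If moreover $\theta=\widetilde\theta\uhr{R}$ for some homomorphism $\widetilde\theta\colon F\to A$, and $\phi\colon H\to F$ is the transversal with $g(u,v)=\phi(u)+\phi(v)-\phi(u+v)$, then
\[
\Phi(\theta)(u,v)=\widetilde\theta(\phi(u))+\widetilde\theta(\phi(v))-\widetilde\theta(\phi(u+v))\in B(H,A),
\]
so $\Phi$ descends to $\bar\Phi\colon\Hom(R,A)/\Hom(F|R,A)\to\Ext(H,A)$.

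For $\Gamma$, the key idea is to pull back along the projection $\pi\colon F\to H$. Given $f\in Z(H,A)$, the map $\widetilde f(x,y):=f(\pi x,\pi y)$ is a normalized cocycle in $Z(F,A)$. Apply the preceding lemma to $\widetilde f$ to obtain $\varphi\colon F\to A$ with $\varphi(0)=0$ and $\widetilde f(x,y)=\varphi(x+y)-\varphi(x)-\varphi(y)$, and set $\Gamma(f):=\varphi\uhr{R}$. For $r,r'\in R$ we have $\widetilde f(r,r')=f(0,0)=0$, so $\varphi(r+r')=\varphi(r)+\varphi(r')$ and $\Gamma(f)\in\Hom(R,A)$. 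Linearity of $\Gamma$ in $f$ follows from inspection of the preceding lemma: with canonical coset representatives $u'(x)=(x,0)$ in $E_{\widetilde f}$, the correcting function $\varphi$ is accumulated linearly from $\widetilde f$-values along a fixed expansion $x=\sum e_\aaa z_\aaa$.

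For the inverse relations, put $\psi:=\varphi\circ\phi\colon H\to A$. The identity $g(u,v)+\phi(u+v)=\phi(u)+\phi(v)$ together with $\widetilde f(g(u,v),\phi(u+v))=f(0,u+v)=0$ give
\[
\varphi(g(u,v))=f(u,v)+\psi(u)+\psi(v)-\psi(u+v),
\]
so $\Phi(\Gamma(f))-f\in B(H,A)$. Conversely, suppose $\Phi(\theta)(u,v)=\eta(u)+\eta(v)-\eta(u+v)$ for some $\eta\colon H\to A$ with $\eta(0)=0$. Each $x\in F$ has a unique decomposition $x=r_x+\phi(\pi x)$ with $r_x\in R$, and a direct check yields $r_{x+y}=r_x+r_y+g(\pi x,\pi y)$. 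Then $\widetilde\theta(x):=\theta(r_x)+\eta(\pi x)$ defines a homomorphism $F\to A$ whose restriction to $R$ is $\theta$, witnessing $\theta\in\Hom(F|R,A)$ as required.

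For the computable setting, $\Phi$ is immediate: a machine with oracle for $\theta$ computes $\theta(g(u,v))$ on demand. For $\Gamma$, on input $x\in R$ a machine with oracle for $f$ forms $\widetilde f(y,z)=f(\pi y,\pi z)$ on demand and runs the uniform construction of $\varphi$ from the preceding lemma, returning $\varphi(x)$. The main obstacle I anticipate is showing that this construction is genuinely uniform in the cocycle input: one needs the unique expansion $x=\sum e_\aaa z_\aaa$ in the free basis and the corresponding finite sum of $u'$-values in $E_{\widetilde f}$ to be uniformly oracle-computable, which requires the standing hypothesis that $\aaa\mapsto z_\aaa$ has computable range and a careful reading of the preceding lemma's proof to confirm all intermediate choices can be made uniformly.
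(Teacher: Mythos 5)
Your proposal is correct and follows essentially the same route as the paper: pull $f$ back to $F$ along the projection, apply the Eilenberg--MacLane lemma on free groups to get $\varphi$, set $\Gamma(f)=\varphi\uhr{R}$, and witness $\Phi(\Gamma(f))-f\in B(H,A)$ via $\varphi\circ u$. You additionally spell out the equivalence ``$\theta\in\Hom(F|R,A)$ iff $\theta\circ g\in B(H,A)$'' (which the paper only cites) via the decomposition $x=r_x+\phi(\pi x)$, and you are slightly less explicit than the paper on the computability of $\Gamma$, where the paper derives a closed-form expression for $\theta(\sum_i x_i)$ in terms of oracle calls to $f$; your appeal to uniformity of the lemma's construction is adequate but that explicit formula is how the paper discharges exactly the obstacle you flag.
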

 
 \begin{proof} As above, corresponding to $g$ pick a coset   representatives via a   $u\colon H \to F$, namely, $R+u(x)=x$ for any $x\in H= F/R$, and 
 \begin{equation} \label{eqn: ug} u(h)+ u(k)  = u(h+k) +g(h,k) \end{equation} for $h,k \in H$. 
 
 First they verify the conditions of $\Phi$. Clearly $\theta\circ g \in Z(H,A)$. They  verify that \bc  $\theta \in \Hom(F| R,A)$ iff $\theta \circ g \in B(H,A)$, i.e.\ is a coboundary. \ec So $\Phi $ induces a well-defined embedding. 
 
 Next they show how to define $\Gamma$.  
 Write $[w]$ for the coset $R+ w \in H$. For $f\in Z(H,A)$ let \bc $f'\in Z(F,A)$ be given by $f'(x,y) = f([x], [y])$. \ec Let $\varphi\colon F \to A$ be the function for $h=f'$ given by (\ref{hhh}). Thus 
 \begin{equation} \label{eqn: phi} \varphi(x+y)  =   \varphi(x) + \varphi(y) +f'(x,y). \end{equation} If $x,y\in R$ then $f'(x,y)=0$ as $f$ is normalized, so $\theta = \varphi |R\in \Hom(R,A)$.  Define \bc $\Gamma(f) = \varphi | R$. \ec    Note that  $\Phi(\Gamma(f) ) = \varphi \circ g $. So to show $\Gamma$ induces the inverse,  the following   suffices:
 
 \begin{claim} For normalized $f\in Z(H,A)$ one has  \bc  $f- (\varphi \circ g ) \in B(H,A)$ via $\varphi \circ u$. \ec \end{claim}
 
 To see this one applies $\varphi$ to the right side $u(h+k)+g(h,k)$ of (\ref{eqn: ug}). According to  
 (\ref{eqn: phi}), this gives  
 \bc  $\varphi(u(h+k))+\varphi(g(h,k)) + f'(u(h+k), g(h,k))$. \ec
 Since $g(h,k)\in R$ and $f$ is normalized, the third term vanishes. So we have
 \[\varphi(u(h))+\varphi(u(k)) = \varphi(u(h+k))+\varphi(g(h,k)).\]
 
 We can now calculate
 
\begin{eqnarray*} f(h,k)= f'(u(h), u(k))&=& \varphi(u(h)+u(k))- \varphi(u(h))- \varphi(u(k)) \\
							&=& \varphi(u(h+k))- \varphi(u(h))- \varphi(u(k)) +\varphi(g(h,k)) \end{eqnarray*}						
where we have first  used (\ref{eqn: phi}) and then the foregoing equation. This verifies the claim.

In the computable setting, it is clear that $\Phi$ can be  evaluated by an oracle Turing machine. To see this for  $\Gamma$, note that the OTM has access to elements of $F/R= H$  because $R$ is computable.  Recall that for $y\in F$ we write $[y]= R+y$.  Fix a computable list of  generators $\seq {z_\aaa}$ as in the statement of the theorem. Each $ w$ in  $F$ can be written as $w=\sum_{i=1}^n x_i$, where $x_i$ is a generator or its inverse. 
\begin{claim}   Given $f\in Z(H,A) $, then $\Gamma(f)=\theta$ is explicitly given as follows:  For an element of $R$ expressed as   $\sum_{i=1}^n x_i$, let  $I$ be the set of indices such that $x_i$ is the inverse of a generator. Then
\begin{equation} \label{eqn:theta f} \theta (\sum_i x_i)=  -\sum_{i \in I} f([x_i],-[x_i])  + \sum_{k=1}^{n-1} f(\sum_{i=1}^k [x_i], [x_{k+1}]). \end{equation}
 \end{claim}
 To verify the claim, note that for $y_i \in F$ an induction based on (\ref{eqn: phi}) shows that 
 $\varphi(\sum_i y_i)=\sum \varphi(y_i)+ \sum_{k=1}^{n-1} f'(\sum_{i=1}^k y_i, y_{k+1})$. Now use that $\varphi(y)=0$ if $y$ is a generator, and $\varphi(y)=-f'(y,-y)$ if $y$ is the negative of a generator; this uses that $0= \varphi(0)= \varphi (y)+ \varphi(-y)+ f'(y,-y)$.  
 
 It is clear that the OTM on input $w\in R$ can find a representation $w = \sum x_i$, and then perform the calculation in (\ref{eqn:theta f}) using oracle calls to $f$. The result will be independent of the particular representation, e.g.\ the order of the $x_i$.
 \end{proof}
 \begin{remark} In fact, $\Phi$ and $\Gamma$ are total Turing functionals, i.e. tt-reductions. \end{remark}
 \begin{example} As one application Eilenberg and MacLane re-verify the known fact  that for $H= C_m$ (the cyclic group of size $m$), one has $A/mA \cong \Ext(H,A)   $. This is clear because one can take $F=\ZZ$, $R=m\ZZ$, so that $\Hom(R,A)$ and $\Hom (F|R,A)$ can be canonically identified with $A$ and $mA$, respectively. 
 
As above let $[k]= m\ZZ +k$.   In the setting of Theorem~\ref{thm: EMcLane 10.1}, choose $k=u([k])$ as the coset  representative of $[k]$, where $k< m$. The corresponding  cocycle $g\colon C_m \times C_m \to R$ in the Theorem is then  given by \bc $g([k], [l])= k+l - (k+l \mod m)$, \ec  which equals  $ 0$ if $k+l < m$, and $m$ if $k+l \ge m$.   

Let us consider  the case $A=\ZZ$. Note that  $\theta \in \Hom(m\ZZ, \ZZ) $ is determined  by its single value $a_\theta=\theta(m)$, in the sense that $\theta(x)=a_\theta x/m$ for each $x\in m\ZZ$. 
Then $\theta \in \Hom(m\ZZ |\ZZ, \ZZ)$ iff $m | a_\theta$. As this means $\Phi(\theta) \in B(C_m, \ZZ)$, we can assume that $0\le a_\theta < m$ without changing the corresponding extension.

Firstly we have $\Phi(\theta) = \frac{a_\theta} m g$. Secondly    given $f \in Z(C_m,\ZZ)$,  we determine 
 $\theta = \Gamma(f)$ as follows.   With $1$ as the generator of the free group $F= \ZZ$,    writing $m$ as the sum of $m$ many $1$'s, by (\ref{eqn:theta f}) we have 
 \[a_\theta=\theta(m)= \sum_{k=1}^{m-1}f([k],[1]).\] 
 
 If $f=\frac{a_\theta} m g$ then this yields $a_\theta$ since $\sum_{k=1}^{m-1}g([k],[1])=m$. Thus, in this simple case of $\Ext(C_m, \ZZ)$,  one can ensure that  $\Gamma(\Phi(\theta))= \theta$. 
 \end{example}

\subsection{Description of $\Ext(C_{p^\infty}, \ZZ)$}

  Fix a prime $p$ and let \bc $H=C_{p^\infty}= \ZZ[1/p]/\ZZ$. \ec  Let $\pi\colon \ZZ[1/p] \to H$ be the projection given by $\pi(w) = \ZZ  + w$.
 Let  $x_{n}:=\pi \left( \frac{1}{p^{n}}\right) $ for $n\in \NN $.

We begin with a presentation of $H$. Let $F$ be the free abelian group with $\mathbb{Z}$-basis $\left(
e_{n}\right) _{n\in \NN }$.
Let $R$  be the subgroup of $F$ freely generated by $e_0$ together with $%
e_{k-1}-pe_{k}$ for $k\geq 1$. Then we have a short exact sequence $%
0 \rightarrow R\rightarrow F\rightarrow H\rightarrow 0$ where the map $F\rightarrow H$ is defined by  $%
e_{n}\mapsto x_{n}$ for $n\in \NN $.

 Define $u \colon H \to F$ by \bc $u(0)= 0$ and  $u \left(
ix_{n}\right) =ie_{n}$ \ec  for every $n\ge 1$ and natural $i < p^n$ not divisible by $p$.
This is a right inverse for the quotent map $F\rightarrow H $.

Since $R$ is computable as as subset of $F$,  by Theorem~\ref{thm: EMcLane 10.1} above there is a computable  isomorphism of abelian groups%
\begin{equation} \label{eqn:CII}
\frac{\mathrm{Hom}\left( R,\mathbb{Z}\right) }{\mathrm{Hom}\left( F|R,%
\mathbb{Z}\right) }\cong \mathrm{Ext}\left( H,\mathbb{Z}\right).
\end{equation}%
%

We now describe a computable  isomorphism \[\mathbb{Z}_{p}\rightarrow \frac{%
\mathrm{Hom}\left( R,\mathbb{Z}\right) }{\mathrm{Hom}\left( F|R,\mathbb{Z}%
\right) }.\]
As already mentioned in Example~\ref{ex:Zp}, we use functions in $\aaa \colon \NN^+ \to \ZZ$ as names for $a=\sum_{n}\aaa(n)p^{n-1}\in \ZZ_p$. Write $a_n= \aaa(n)$. 
\begin{definition}  \label{dfn:phia}
Given a name    $\aaa \colon \NN \to \ZZ$, let 
  $\varphi _{\aaa}\in \Hom(R, \mathbb{Z})$ be defined
by \bc $\varphi _{\aaa}(e_0)= 0$ and $\varphi _{\aaa}\left(
e_{r-1}-pe_{r}\right) =a_{r}$ for $r\geq 1$. \ec  \end{definition}
  Write $\aaa\uhr {k,n}$ for $p^{-k}  \sum_{s=k+1}^n a_s p^{s-1}$.   This is the integer given by the block of the sequence $\aaa$ from $k$ to $n-1$.
 \begin{claim} \label{cl: phiaaa descr} For $0 \le k< n$ one has 
 \[\phi_\aaa(e_k- p^{n-k}e_n)= \sum_{s=k+1}^{n} a_{s}p^{s-k-1} =\aaa\uhr {k,n}.\] \end{claim}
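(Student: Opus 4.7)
The plan is to express the element $e_k - p^{n-k}e_n \in R$ as an explicit $\ZZ$-linear combination of the free generators $e_{r-1} - pe_r$ of $R$, then apply the homomorphism $\phi_\aaa$ term by term using the defining values from Definition~\ref{dfn:phia}.

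The key observation is a telescoping identity: I claim that
\[
e_k - p^{n-k}e_n \;=\; \sum_{r=k+1}^{n} p^{r-k-1}\bigl(e_{r-1} - p e_r\bigr).
\]
This is straightforward to verify by separating the sum into two pieces, reindexing $s = r-1$ in the first piece, and observing that all intermediate terms $e_{k+1}, \dots, e_{n-1}$ cancel, leaving precisely $e_k - p^{n-k}e_n$. Alternatively one can prove it by induction on $n - k$: the base case $n = k+1$ is the generator itself, and the inductive step uses $e_k - p^{n-k}e_n = (e_k - p^{n-1-k}e_{n-1}) + p^{n-1-k}(e_{n-1} - p e_n)$.

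Once the telescoping identity is in hand, applying the homomorphism $\phi_\aaa$ and using $\phi_\aaa(e_{r-1} - pe_r) = a_r$ gives
\[
\phi_\aaa(e_k - p^{n-k}e_n) \;=\; \sum_{r=k+1}^{n} p^{r-k-1} a_r,
\]
which is exactly $\aaa\uhr{k,n}$ by its definition $p^{-k}\sum_{s=k+1}^n a_s p^{s-1}$.

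There is no real obstacle here; the statement is a routine unwinding of the definition of $\phi_\aaa$ on an element of $R$ written in terms of the chosen free basis. The only thing to be a little careful about is that $e_k - p^{n-k}e_n$ actually lies in $R$ (which is clear from the telescoping identity, since the right-hand side is a $\ZZ$-linear combination of generators of $R$) and that the coefficient pattern $p^{r-k-1}$ aligns correctly with the shift by $k$ in the index $\aaa\uhr{k,n}$.
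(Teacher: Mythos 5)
Your proof is correct and is essentially the paper's argument: the paper proves the identity by induction on $n$ for fixed $k$, using exactly the decomposition $e_k - p^{n+1-k}e_{n+1} = (e_k - p^{n-k}e_n) + p^{n-k}(e_n - pe_{n+1})$ that underlies your telescoping sum (and which you yourself note as the inductive alternative). Writing the telescoping identity in closed form versus unrolling it by induction is only a presentational difference.
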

 In particular, if $\aaa \in \ZZ_p$, letting $k=0$, we have that  $\phi_\aaa(-p^n e_n)$ equals $\aaa \uhr n$. To verify this  equation  fix $k$. Use    induction on $n$. 
 
 If $n=k+1$ the equation  reduces to  the defining conditions for $\phi_\aaa$ (including the case that $k=0$).  Suppose now  the equation holds for $n$. Then 
 
 \begin{eqnarray*}   \phi_\aaa(e_k - p^{n+1-k}e_{n+1})&=&  \phi_\aaa( e_k- p^{n-k} e_n  + p^{n-k} (e_n- p e_{n+1})) \\
 									&=& \sum_{i=k+1}^{n} a_{i}p^{i-k-1} + p^{n-k} a_{n+1} \\
 &=& \sum_{i=k+1}^{n+1} a_{i}p^{i-k-1}. \end{eqnarray*}

\begin{claim} Let $\aaa \colon \NN^+ \to \ZZ$ be a name, and write  $a_n=  \aaa(n)$. Then 

\bc  $a:=\sum_{n\in \NN}a_{n}p^{n-1} =0$ in $\ZZ_p $ if and only if $\varphi _{\aaa}\in \mathrm{Hom}\left( F|R,%
\mathbb{Z}\right) $.  \ec \end{claim}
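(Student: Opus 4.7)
The plan is to reduce the question of whether $\varphi_\aaa$ extends to $F$ to a linear recurrence over $\ZZ$, solve that recurrence explicitly, and then recognize the integrality condition as the statement that $a = 0$ in $\ZZ_p$.

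Since $F$ is freely generated by $(e_n)_{n \in \NN}$, any extension $\widetilde \varphi \colon F \to \ZZ$ is determined by integers $b_n := \widetilde \varphi(e_n)$, and conversely any choice of integers $(b_n)$ gives a homomorphism. Such a $\widetilde\varphi$ extends $\varphi_\aaa$ precisely when $b_0 = 0$ (from the generator $e_0$ of $R$) and
\[ b_{r-1} - p b_r = a_r \quad \text{for every } r \geq 1 \]
(from the generators $e_{r-1} - pe_r$ of $R$). So I would first show that the existence of such integers $(b_n)$ is equivalent to $\varphi_\aaa \in \Hom(F|R, \ZZ)$.

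Next, I would solve this recurrence. Since $b_0 = 0$, an easy induction on $n$ (along the lines of Claim \ref{cl: phiaaa descr}) shows that the unique rational solution is
\[ b_n = -\frac{1}{p^n}\sum_{s=1}^n a_s p^{s-1} = -\frac{\aaa\uhr n}{p^n}. \]
Hence $\widetilde\varphi$ exists as a homomorphism into $\ZZ$ if and only if $p^n \mid \aaa\uhr n$ for every $n \geq 1$.

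Finally, I would translate this integrality condition into the $p$-adic statement. Writing $a = \sum_{n \geq 1} a_n p^{n-1} \in \ZZ_p$, the tail $\sum_{s > n} a_s p^{s-1}$ is divisible by $p^n$, so $a \equiv \aaa\uhr n \pmod{p^n}$. Therefore $a = 0$ in $\ZZ_p$ (i.e.\ $a \equiv 0 \pmod{p^n}$ for all $n$) if and only if $p^n \mid \aaa\uhr n$ for all $n$, which is exactly the condition for $\varphi_\aaa$ to extend. There is no real obstacle here: the argument is almost entirely bookkeeping, and the only point worth checking carefully is that after setting $b_n = -\aaa\uhr n / p^n$ one recovers $b_{r-1} - pb_r = a_r$, which follows from $\aaa\uhr r - \aaa\uhr{r-1} = a_r p^{r-1}$.
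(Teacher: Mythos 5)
Your proof is correct and follows essentially the same route as the paper's: both reduce membership in $\Hom(F|R,\ZZ)$ to the recurrence $b_0=0$, $b_{r-1}-pb_r=a_r$ for the values $b_n=\psi(e_n)$ of a lift, and both identify solvability in $\ZZ$ with the divisibilities $p^n\mid \aaa\uhr{n}$, i.e.\ with $a=0$ in $\ZZ_p$. The only difference is organizational: you solve the recurrence in closed form once ($b_n=-\aaa\uhr{n}/p^{n}$) and read off both implications from integrality, whereas the paper runs two separate inductions, one for each direction.
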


\lapf  
By hypothesis,  $\varphi _{\aaa}$ lifts to a homomorphism $\psi : F\rightarrow \mathbb{Z}$.
Define $b_{n}:=\psi \left( e_{n}\right) $ for $n\in \NN $. 
Thus, \bc $a_{1}=-pb_{1}$ and $a_{k}-b_{k-1} = -pb_{k}$ for each $k \ge 2$. \ec

We prove by induction on $k\ge 2$  the equation \bc $a=\sum_{n\geq k}p^{n-1}a_{n}-p^k b_{k-1} $. \ec This suffices to show $a=0$ because it implies that $p^k |a$ for each $k$.

For $k=2$, we have $a = \sum_{n\geq 2}p^{n}a_{n} - pb_1$ since $a_{1}=-pb_{1}$. Suppose now that the equation holds for $k$. Then 
\begin{eqnarray*} a =\sum_{n\geq k}p^{n}a_{n}-p^k b_{k-1}&=&\sum_{n\geq k+1}p^{n}a_{n}  + p^k(a_k-b_{k-1} ) \\ &=&\sum_{n\geq k+1}p^{n}a_{n}-  p^{k+1} b_{k},  \end{eqnarray*}
 as required.

\rapf Suppose $a=0$. We define recursively $%
b_{1},b_{2},\ldots $ such that $-pb_{1}=a_{1}$ and $b_{n-1}-pb_{n}=a_{n}$
for $n\geq 2$, and then define a homomorphism $\psi :F\rightarrow \mathbb{Z}$
extending $\varphi _{\aaa}$ by setting $\psi \left( e_{n}\right) =b_{n}$ for $%
n\in \NN $.

Since $a=0$ we have  $%
a_{1}\equiv 0\mathrm{\ \mathrm{mod}}\ p$ and hence $a_{1}=-pb_{1}$ for some $b_1$. 

Then
\begin{eqnarray*}
a &=&\sum_{n\geq 2}p^{n}a_{n}-pb_{1} \\
&=&\sum_{n\geq 3}p^{n}a_{n}+p\left( a_{2}-b_{1}\right) 
\end{eqnarray*}%
Since $p^2 \mid a$ in $\ZZ_p$, we have  $p \mid a_{2}-b_{1} $ and hence $%
a_{2}-b_{1}=-pb_{2}$ for some $b_{2}$. Thus,%
\begin{equation*}
a=\sum_{n\geq 4}p^{n}a_{n}+p^{2}\left( a_{3}-b_{2}\right) .
\end{equation*}%
Since  $p^3 \mid  a$ this implies   $%
a_{3}-b_{2}=-pb_{3}$ for some $b_{3}$. One then proceeds in this fashion.

This verifies the claim.
 By the claim, the coset $%
\varphi _{\aaa}+\mathrm{Hom}\left( F|R,\mathbb{Z}\right) $ only depends on $a$
and not on the particular name $\aaa$ of $a$.
This implies that the function $\mathbb{Z}_{p}\rightarrow \frac{\mathrm{Hom}%
\left( R,\mathbb{Z}\right) }{\mathrm{Hom}\left( F|R,\mathbb{Z}\right) }$ is
a group isomorphism. In the computable setting, it is clear from Def.\ \ref{dfn:phia} that an OTM  can compute $\phi_\aaa$ from $\aaa$ and conversely. So the isomorphism is computable in the sense of Def.\ \ref{df:names}.

\bigskip

Thus, we have (computable) isomorphisms%
\begin{equation*}
\mathbb{Z}_{p}\rightarrow \frac{\mathrm{Hom}\left( R,\mathbb{Z}\right) }{%
\mathrm{Hom}\left( F|R,\mathbb{Z}\right) }\rightarrow \mathrm{Ext}\left( 
\mathbb{Z}\left( p^{\infty }\right) ,\mathbb{Z}\right)
\end{equation*}%
as required. 

Next we will explicitly describe a cocycle $c_a$ corresponding to $a \in \ZZ_p$. We fix  a name $\aaa$, write  $a_n$ for $\aaa(n)$. Assume    that $0 \le a_n< p$ for each $n$. 

Recall that  $u :H\rightarrow F$ is the function defined by setting $u(0)= 0$ and  $u \left(
ix_{n}\right) =ie_{n}$ for every $n\ge 1$ and $0\leq i<p^{n}$ not
divisible by $p$. Also recall that in the first step  of constructing the isomorphism above, $\aaa$ is mapped to $\varphi _{\aaa} $   defined in~\ref{dfn:phia}. In the second step $%
\varphi _{\aaa}$ is mapped to the cocycle $c_{\aaa}$ on $\mathbb{Z}\left(
p^{\infty }\right) $ with coefficients in $\mathbb{Z}$ defined by  \[%
c_{\aaa}\left( x,y\right) =\varphi _{\aaa}\left( u \left( x\right) +u \left(
y\right) -u \left( x+y\right) \right) \] for $x,y\in \mathbb{Z}\left(
p^{\infty }\right)$.

We first give an expression for $ u \left( x\right) +u \left(
y\right) -u \left( x+y\right) $ where $x,y \in H   - \{ 0\}$. Suppressing  the projection map $\pi \colon \ZZ[1/p]\to \ZZ(p^\infty)$, we can write $x=i/p^{n}$ and $y=j/p^{k}$ where $i<p^{n}$ and $j<p^{k}$ are natural numbers
not divisible by $p$. 
  Thus, we have that%
\begin{equation*}
u \left( x\right) =ie_{n}, 
u \left( y\right) =je_{k}
\end{equation*}%
and, in case  say, $1 \le k\leq n$, we  have $x+y= ({i+jp^{n-k}\mod p^n}){p^{-n}}$.

\n
\emph{Case  $k<n$:}  then $i+jp^{n-k}\mod p^{n}$ is not divisible by $p$.

\n  If  $i+jp^{n-k} < p^n$ then $u \left( x+y\right) =\left( i+jp^{n-k} \right) e_{n}$, so  %
\begin{equation*}
u(x) + u(y) - u \left( x+y\right) = j(e_n  -p^{n-k} e_{n}).
\end{equation*}%
If  $i+jp^{n-k} \ge p^n$ then  $u \left( x+y\right) = ( i+jp^{n-k} -p^n) e_{n}$, so  %
\begin{equation*}
u(x) + u(y) - u \left( x+y\right) = j(e_n  -p^{n-k} e_{n}) + p^n e_n
\end{equation*}%

\n \emph{Case $k=n$:} then $i+j\equiv p^{n-k' }v\mathrm{\ \mathrm{mod}}\ p^{n}$ for some $%
v<p^{k' }$ not divisible by~$p$ and hence $x+y=\frac{v}{p^{k' }}$, $u \left( x+y\right) =ve_{k' }$, and %
\begin{equation*}
u(x) + u(y) - u \left( x+y\right)  =(i+j)e_n -ve_{k' }
\end{equation*}%
Secondly, using  Claim~\ref{cl: phiaaa descr} we evaluate $c_\aaa(x,y)$ by applying  $\phi_\aaa$:  

\n  Let $d=1 $ if $ i/p^{n} + j/p^{k} \ge 1 $ in $\ZZ [1/p]$, and $d=0$ otherwise.

\n Write $\aaa\uhr {k,n}$ for $p^{-k} \sum_{s=k+1}^n a_s p^{s-1}$ as before.   

\bi \item if $ k<n$ :
\begin{eqnarray*} 
 {c}_{\aaa}\left( i/p^{n},j/p^{k}\right) &=&\varphi _{\aaa}(je_{k}-jp^{n-k}e_{n} - d p^n e_n) \\ &=& j\aaa\uhr{k,n} + d \aaa \uhr {0,n}
 \end{eqnarray*}
 
%
 \item  if $ 
n=k,i+j\equiv p^{n-k' }v\mathrm{\ \mathrm{mod}}\ p^{n}\text{ for }%
v<p^{k' }\text{ not divisible by }p%
  $  
  
\begin{eqnarray*}
 {c}_{\aaa}\left( i/p^{n},j/p^{k}\right) &=& \varphi _{\aaa}((p^{n-k' }v + d p^n)  e_{n} -ve_{k' }) \\ &=& v \varphi_\aaa( p^{n-k'} e_n - e_{k'}) +\phi_\aaa(d p^n  e_{n} )  )\\ &=& -v \aaa\uhr {k',n}-d \aaa \uhr {0,n} \end{eqnarray*}

\ei
 
 \subsection{Description of $\Ext(\ZZ[1/p], \ZZ)$}

 \begin{lemma} 
The  following homomorphism 
\bc $
\mathrm{Ext}\left( \mathbb{Z}\left( p^{\infty }\right) ,\mathbb{Z}\right)
\rightarrow \mathrm{Ext}\left( \mathbb{Z}[1/p],\mathbb{Z}\right)
$ \ec   is onto and has kernel $\ZZ$:   a cocycle 
$c\in Z(\ZZ(p^\infty), \ZZ))$ is mapped to the cocycle $\widetilde{c}%
$ on $\mathbb{Z}[1/p]$ with coefficients in $\mathbb{Z}$ defined by 
 $
\widetilde{c}\left( x,y\right) =c\left( \pi \left( x\right) ,\pi
\left( y\right) \right)  
$.
\end{lemma}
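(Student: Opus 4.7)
The map $[c] \mapsto [\tilde c]$ is the standard contravariant pullback $\pi^{*}\colon \Ext(\ZZ(p^\infty),\ZZ) \to \Ext(\ZZ[1/p],\ZZ)$ induced by $\pi$. The plan is to fit $\pi^{*}$ into the long exact sequence in $\Ext$ obtained by applying $\Hom(-,\ZZ)$ to the short exact sequence $0 \to \ZZ \to \ZZ[1/p] \stackrel{\pi}{\to} \ZZ(p^\infty)\to 0$:
\[
\Hom(\ZZ[1/p],\ZZ) \to \Hom(\ZZ,\ZZ) \to \Ext(\ZZ(p^\infty),\ZZ) \stackrel{\pi^{*}}{\to} \Ext(\ZZ[1/p],\ZZ) \to \Ext(\ZZ,\ZZ).
\]

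I would then compute the outer terms. First, $\Hom(\ZZ[1/p],\ZZ)=0$ because a nonzero image would be a $p$-divisible subgroup of $\ZZ$; and $\Ext(\ZZ,\ZZ)=0$ because $\ZZ$ is free. Since $\Hom(\ZZ,\ZZ)\cong \ZZ$, the long exact sequence collapses to
\[
0 \to \ZZ \to \Ext(\ZZ(p^\infty),\ZZ) \stackrel{\pi^{*}}{\to} \Ext(\ZZ[1/p],\ZZ) \to 0,
\]
which is exactly the lemma.

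Since the paper develops $\Ext$ in concrete cocycle language, I would also unpack the two pieces of exactness directly. For surjectivity: given an extension $0\to\ZZ\to E\to \ZZ[1/p]\to 0$, the preimage of $\ZZ\subseteq\ZZ[1/p]$ in $E$ is an extension of $\ZZ$ by $\ZZ$, which splits by $\Ext(\ZZ,\ZZ)=0$; fixing a section $s\colon \ZZ\to E$, the quotient $E/s(\ZZ)$ is an extension of $\ZZ(p^\infty)$ by $\ZZ$, and a five-lemma argument shows its pullback along $\pi$ is isomorphic to $E$. For the kernel: an extension $0\to\ZZ\to E\stackrel{q}{\to}\ZZ(p^\infty)\to 0$ lies in $\ker \pi^{*}$ iff there is a homomorphism $\sigma\colon \ZZ[1/p]\to E$ with $q\sigma=\pi$; such $\sigma$ restricts to a homomorphism $\ZZ \to \ker q = \ZZ$, hence multiplication by some $m\in\ZZ$, and $m$ depends only on $[E]$ because any two choices of $\sigma$ differ by an element of $\Hom(\ZZ[1/p],\ZZ)=0$. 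The inverse $\ZZ\to \ker \pi^{*}$ sends $m$ to the pushout of $0\to\ZZ\to\ZZ[1/p]\to \ZZ(p^\infty)\to 0$ along $m\cdot \id_{\ZZ}$, recovering every class in the kernel.

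The main obstacle, if one wants to avoid citing the long exact sequence, is the cocycle-level verification in the surjectivity step that the cocycle built from $E/s(\ZZ)$ really does pull back (up to a coboundary) to the cocycle for $E$; this is routine but is the most computation-heavy part of a fully self-contained proof.
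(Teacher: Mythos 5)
Your proposal is correct and follows essentially the same route as the paper: both apply the $\Hom(-,\ZZ)$ long exact sequence to $0\to\ZZ\to\ZZ[1/p]\xrightarrow{\pi}\ZZ(p^\infty)\to 0$ and use $\Hom(\ZZ[1/p],\ZZ)=0$, $\Hom(\ZZ,\ZZ)\cong\ZZ$, and $\Ext(\ZZ,\ZZ)=0$ to collapse it. The paper simply cites Fuchs for the identification of the connecting map with the cocycle pullback $c\mapsto\widetilde c$, whereas you sketch that verification directly; otherwise the arguments coincide.
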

\begin{proof} One uses the long exact sequences of Cartan and Eilenberg; see e.g.\  the first statement in Fuchs~\cite[Ch.\ 7, Th.\ 2.3]{Fuchs:15}. Starting from the exact sequence $0\to \ZZ \to \ZZ[1/p] \to \ZZ(p^\infty)\to 0$, one obtains an exact sequence

  $\Hom (\ZZ[1/p], \ZZ) \to \Hom(\ZZ, \ZZ) \to $ 
  
  \hfill $\Ext( \ZZ(p^\infty), \ZZ) \to \Ext(\ZZ[1/p], \ZZ) \to \Ext(\ZZ, \ZZ) \to 0$.  
  
  Since 
  $\Hom (\ZZ[1/p], \ZZ) =0$,  $ \Hom(\ZZ, \ZZ) \cong  \ZZ$, and $\Ext(\ZZ,\ZZ)=0$, the third connecting map is   an epimorphism with kernel  isomorphic to $\ZZ$. The proof in Fuchs shows that this  map is induced by combining the projection an cocycles as required.
\end{proof}
This gives an epimorphism with kernel $\ZZ$%
\begin{equation*}
\mathbb{Z}_{p}\rightarrow \mathrm{Ext}\left( \mathbb{Z}[1/p],\mathbb{Z}.%
\right)
\end{equation*}%
Question: describe, up to sign,  the   $a\in \ZZ_p$ that   generates  the kernel.

 \subsection{Describing all the rank-2 subgroups of $\mathbb{Z}[1/p]^{2}$}

Suppose   $K\subseteq \mathbb{Z}[1/p]^{2}$  has rank 2.
Let $A\subseteq K$ be a $p$-basic subgroup:  since $K$ is torsion free this means it is a direct sum of  infinite cyclic groups, $A$ is $p$-pure in $K$ (i.e., $pK \cap A   = pA$), and $K/A$ is $p$-divisible (i.e., $K/A= p(K/A)$).
 
If $A=0$ then $K$ is $p$-divisible. 
If $A=K$ then $K$ is free.
Otherwise,  $A$ has rank $1$,  so  $A\cong \mathbb{Z}$ and $K/A$ has rank $1$ and is $p
$-divisible; thus $K/A\cong \mathbb{Z}[1/p]$.
In this case, $K$ is an extension of $\mathbb{Z}[1/p]$ by $\mathbb{Z}$.

Thus,   $K$ can be seen to have  domain $\mathbb{Z}\times \mathbb{Z}[1/p]$ with operation
defined by, for some $a\in \mathbb{Z}_{p}$,%
\begin{equation*}
\left( x,i/p^{n}\right) +\left( y,j/p^{m}\right) =\left( x+y+\widetilde{c}%
_{a}\left( i/p^{n},j/p^{m}\right) ,\frac{i}{p^{n}}+\frac{p^{n-m}j}{p^{n}}\right) 
\end{equation*}%
in case $m\le n$, and symmetrically  if $m \ge n$.
 
%
%
%
%

    \section{Nies: questions on groups and logic} We provide some open questions connect group theory and logic.  They have in part been discussed with Segal.

\subsection{Profinite groups}

1.   
  Are  the pro-$p$ completion of  the $F_n$, for $n \ge 2$, finitely axiomatizable   (FA) in the pro-$p$ groups/ all profinite groups?
 
 
 
2.  Are simple Lie-algebras over $\ZZ_p$ FA in the class of Lie algebras over $\ZZ_p$? This would be  a Lie analog to the result on powerful pro-$p$ groups. Same
for $sl_{2}(Z_{p})$ and such algebras.

%

\subsection{Pseudofinite groups}

$C_2^{(\omega)}$ is  a pseudofinite group   with a solvable word problem. Its theory  is axiomatised  by saying the group is infinite, and of exponent 2. Each finite set of axioms holds in some finite quotient.

1. Is there an infinite f.g.\ pseudofinite group with solvable WP? If not then f.p. pseudofinite groups, being r.f., cannot be infinite.

2. What are the $\omega$-categorical extensions of the theory of finite groups? $\mathrm{Th} (C_2^{(\omega)})$ is an example.  I tried for a bit the free  group of infinite rank  in the variety of  nilpotent-2 groups of exponent 3. 

3. Which groups are FA in the class of countable pseudofinite groups? Again $C_2^{(\omega)}$ is an example.


 \subsection{QFA groups} Recall that a f.g.\ infinite group $G$  is  QFA is there is a f.o.\ sentence such that $G$  is up to isomorphism its  only model among the f.g.\ groups. 
 
 1. Long standing  question: is each QFA group prime? (I.e., is each $n$-orbit, $n \ge 1$ definable?)
 
 This question also makes sense for profinite groups. One could define a concept of primeness by saying that for each $n$ the tuples with definable orbits are dense. 
 
 In the pro-$p$ case, a better way would be that each orbit is definable in the language with p-adic power operations. 
 $\ZZ_p$ might be an example.

 2. Can a QFA group be torsion?  (This appears to be  related to the questions whether there is a f.p.\ infinite torsion group.) How about infinite Burnside groups, can they be QFA?
 
%
  \part{Metric spaces and descriptive set theory}
   \newcommand{\Op}{\text{\it Op}}
\newcommand{\Inv}{\text{\it Inv}}

\section{Melnikov: Polish metric spaces and t.d.l.c.\ groups}

\subsection{Definitions from computable metric space theory}
\begin{definition}
A Polish space $M$ is \emph{computable Polish} or \emph{computably metrized} if there is a compatible, complete metric $d$ and a countable sequence of \emph{special points} $(x_i)$ dense in $M$ such that, on input $i, j, n$, we can compute a rational number $r$ such that $|r - d(x_i, x_j)| <2^{-n}$. 
\end{definition}
We allow the possibility that $d(x_i, x_j) =0$. However, it is not difficult at all to exclude repetitions from the dense set if necessary; we will not need this in our proofs.

\

A \emph{basic open ball} is an open ball having a rational radius and centred in a special point. Let $ X$ be a computable Polish space, and $(B_i)$ is the effective list of all its basic open balls, perhaps with repetition. (We also sometimes write $B_r(x)$ for the open ball having radius $r$ and centred in $x$: $B_r(x) = \{y: d(x,y)<r\}$.)

\begin{definition}\label{def:point} 
We call 
\[
N^x = \{i: x \in B_{i}\}
\] 
\emph{the name of $x$} (in $ X$).  
\end{definition}

We can also use basic open balls to produce names of open sets, as follows. 
A \emph{name} of an open set $U$ in a computable topological space $X$ is a set  $W \subseteq \mathbb{N}$ such that $U = \bigcup_{i \in W} B_i$, where $B_i$ stands for the $i$-th basic open set in the basis of $X$.
 If  an open $U$ has a c.e.~name, then we say that $U$ is \emph{effectively open}.

\begin{definition} \label{def:cont}
A function $f\colon X\to Y$  between two computably metrized Polish spaces is {\em effectively continuous}  if there is a c.e.\ family $F \subseteq \mathcal{P}( X) \times \mathcal{P}( Y)$ of pairs of (indices of) basic open sets in  such that:
\begin{itemize}
\item[(C1):] for every $(U,V)\in F$, $f(U)\subseteq V$;
\item[(C2):]  for every $x \in  X$ and basic open $E \ni f(x)$ in $ Y$  there exists a basic open $D \ni x$ in $ X$ such that $(D, E) \in F$. 
\end{itemize}
\end{definition}

Note that a function is continuous if and only if it is effectively continuous relative to some oracle. 

Recall that an enumeration operator $\Phi$  is given by a c.e.\ set $S$ of pairs of natural numbers. On input $Y$, one has $\Phi^Y= \{ n \colon \ex r \,[ \la r, n \ra \in S \lland D_r \sub Y]\} $.   Informally, $\Phi$   only uses ``positive" information from  $Y$, and hence turns enumarations into enumerations. 
The lemma below is well-known.

\begin{lemma}\label{le: computable map}
Let $f\colon X\to Y$ be a function between computable Polish spaces.
The following are equivalent:
\begin{enumerate}

\item $f$ is effectively continuous.
\item There is an enumeration operator $\Phi$ that on input a name of an open set $Y$ (in $Y$), lists a name of $f^{-1}(Y)$ (in $ X$).
\item There is an enumeration operator $\Psi$, that given the name of $x \in X$, enumerates the name of $f(x)$ in $ Y$.     \label{part: Phi}
\item There exists a uniformly effective procedure that on input  a fast Cauchy name of $x \in M$  lists a fast Cauchy name of $f(x)$ (note that the Cauchy names need not be computable).
\end{enumerate}
\end{lemma}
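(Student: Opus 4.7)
The plan is to close the cycle $(1) \Rightarrow (2) \Rightarrow (3) \Rightarrow (4) \Rightarrow (1)$. Conceptually, all four conditions assert that $f$ transports positive (enumerable) information from $X$ to $Y$, and each can be obtained from the next by a routine conversion between the various name formats. I will write $B_i$ for basic open balls in $X$ and $C_j$ for basic open balls in $Y$.

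For $(1)\Rightarrow(2)$, I would observe that C1 and C2 together give $f^{-1}(C_j)=\bigcup\{B_i:(B_i,C_j)\in F\}$ for each basic open $C_j$, and then name $f^{-1}(V)$ by taking the union of such sets over the c.e.\ name $W$ of $V$. For $(2)\Rightarrow(3)$, I would feed the singleton name $\{j\}$ of each $C_j$ into the preimage operator, obtaining uniformly in $j$ a c.e.\ set $W_j$ that names $f^{-1}(C_j)$; then $j\in N^{f(x)}$ iff $W_j\cap N^x\neq\emptyset$, which is exactly the sort of positive test a name-to-name enumeration operator can perform. For $(3)\Rightarrow(4)$, given a fast Cauchy name $(p_k)$ of $x$ I would first enumerate $N^x$ via the c.e.\ sufficient criterion ``$d(z,p_k)+2^{-k}<r$ for some $k$'' (which implies $x\in B_r(z)$ by the triangle inequality, and is eventually witnessed for every such index), apply $\Psi$ to obtain $N^{f(x)}$, then extract a fast Cauchy name of $f(x)$ by waiting, for each $k$ in parallel, for a basic ball in $Y$ of radius at most $2^{-k}$ to appear in $N^{f(x)}$ and outputting its center; density of special points in $Y$ guarantees such a ball exists, and monotonicity of the enumeration guarantees it eventually appears.

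The main obstacle is $(4)\Rightarrow(1)$, which I would route through (3): first compose (4) with the two conversions above to obtain an enumeration operator $\Psi\colon N^x\mapsto N^{f(x)}$, then define
\[
F = \bigl\{(D,E) : \exists \text{ finite } A \text{ with } \Psi^A\ni E \text{ and } d(p,q_i)+s<r_i \text{ for every } i\in A\bigr\},
\]
where $D=B_s(p)$ and $B_i=B_{r_i}(q_i)$. Condition C1 then follows from monotonicity of $\Psi$: any $x\in D$ has $A\subseteq N^x$ by the triangle inequality, so $E\in \Psi^A\subseteq \Psi^{N^x}=N^{f(x)}$. For C2, given $f(x)\in E$, one picks a finite $A\subseteq N^x$ with $\Psi^A\ni E$; since $\bigcap_{i\in A}B_i$ is an open neighborhood of $x$ and special points are dense with effectively approximable distances, one can find a basic ball $D\ni x$ satisfying the strict triangle condition with respect to every $B_i$ for $i\in A$. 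The delicate point is precisely that the c.e.\ certificate of inclusion must be rich enough to yield density in C2; using strict triangle containment rather than actual set inclusion handles this uniformly, and is the reason the argument works in the computably metrized setting rather than in an abstract computable topological one.
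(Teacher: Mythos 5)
The paper states this lemma as ``well-known'' and gives no proof at all, so there is nothing to compare your argument against; judged on its own, your cycle $(1)\Rightarrow(2)\Rightarrow(3)\Rightarrow(4)\Rightarrow(1)$ is correct and is the standard way this equivalence is established. The individual steps check out: the identity $f^{-1}(C_j)=\bigcup\{B_i:(B_i,C_j)\in F\}$ does follow from C1 and C2; feeding singleton names into the preimage operator and testing $W_j\cap N^x\neq\emptyset$ is a legitimate enumeration operator; and your c.e.\ family $F$ in the last step satisfies C1 by monotonicity of $\Psi$ and C2 by the $\epsilon/4$--$\epsilon/2$ choice of a ball formally contained in each $B_i$, $i\in A$ (the same ``formal containment'' trick the paper uses elsewhere, e.g.\ in Lemma \ref{lem:clopen}). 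The one place where you gloss over something worth a sentence is the claim that composing the procedure of (4) with the two name conversions yields a genuine enumeration operator: the conversion $N^x\mapsto$ fast Cauchy name is an operational search whose output depends on the enumeration order of $N^x$, so to get an honest c.e.\ set of axioms $\langle A,j\rangle$ one should say that an axiom is enumerated whenever a finite $A$ contains, for each $k\le m$, a ball of radius at most $2^{-k}$ centred at $p_k$, the functional of (4) run on $p_0,\dots,p_m$ outputs enough of a Cauchy name $q_0,\dots,q_l$, and $d(z_j,q_k)+2^{-k}<r_j$ for some $k\le l$; monotonicity and correctness then follow because any such prefix extends to a full fast Cauchy name of $x$. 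This is routine, but it is the only step in your write-up that is not already fully rigorous.
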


\begin{definition}\label{def:open} A function $f\colon X\to Y$ is \emph{effectively open} if there is a c.e.\ family $F$ of pairs of basic open sets such that 
\begin{itemize}
\item[(O1):] for every $(U,V)\in F$, $f(U)\supseteq V$;
\item[(O2):]  for every $x \in  X$ and any basic open $E \ni x$ there exists a basic open $D \ni f(x)$  such that $(E,D) \in F$. 
\end{itemize}
\end{definition}

The lemma below is elementary.

\begin{lemma}\label{le: eff open}\cite{Melnikov.Montalban:18}  
Lef $f\colon X\to Y$ be a function between computable Polish spaces.
The following are equivalent:
\begin{enumerate}
\item $f$ is effectively open.
\item There is an enumeration operator that given a name of an open set $A$ in $X$, outputs a name of the open set $f(A)$ in $Y$.

\end{enumerate}
\end{lemma}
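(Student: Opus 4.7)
The plan is to prove the two implications by direct manipulation of the defining data, exactly as in the analogous Lemma \ref{le: computable map} for effective continuity. The proof is short because the definitions of effective openness and ``enumeration operator between names of open sets'' are essentially two encodings of the same c.e.\ information.

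For $(1)\Rightarrow(2)$, I would fix the c.e.\ family $F$ witnessing effective openness and define an enumeration operator $\Phi$ as follows: on input a name $W\subseteq\NN$ of an open set $A\subseteq X$ (so $A=\bigcup_{i\in W}B_i$), enumerate all indices $j$ for which there exists $i\in W$ with $(B_i,B_j)\in F$. I would then verify that the set $W'$ enumerated is a name of $f(A)$. Soundness is immediate from (O1): every such $B_j$ satisfies $B_j\subseteq f(B_i)\subseteq f(A)$. Completeness uses (O2): given $y\in f(A)$, write $y=f(x)$ with $x\in A$, pick $i\in W$ with $x\in B_i$, and apply (O2) to $x$ and $E=B_i$ to obtain a basic open $D=B_j\ni f(x)$ with $(B_i,B_j)\in F$; then $j\in W'$ and $y\in B_j$.

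For $(2)\Rightarrow(1)$, I would use the given operator $\Phi$ by feeding it singleton names of basic open sets. For each index $i$, the set $\{i\}$ is a name of $B_i$, so $\Phi(\{i\})$ enumerates a name of $f(B_i)$. Define
\[
F \;=\; \{(B_i,B_j)\,:\, j\in \Phi(\{i\})\},
\]
which is c.e.\ since $\Phi$ is uniformly c.e. Condition (O1) is immediate because $B_j$ appears in a name of $f(B_i)$, hence $B_j\subseteq f(B_i)$. For (O2), given $x\in X$ and a basic open $E=B_i\ni x$, we have $f(x)\in f(B_i)=\bigcup_{j\in\Phi(\{i\})}B_j$, so some $B_j$ with $(B_i,B_j)\in F$ contains $f(x)$, providing the required $D$.

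There is no real obstacle here; the only point requiring minor care is the implicit use of the monotonicity of enumeration operators in $(2)\Rightarrow(1)$, ensuring that $\Phi(\{i\})$ is a well-defined c.e.\ set uniformly in $i$, so that $F$ is genuinely c.e. Everything else is a straightforward unpacking of the two definitions.
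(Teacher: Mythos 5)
Your proof is correct. The paper gives no proof of this lemma at all (it is labelled ``elementary'' and attributed to Melnikov--Montalb\'an), and your argument is precisely the standard unpacking of the two definitions that is intended: both directions check out, including the uniform computability of $\Phi(\{i\})$ in $i$ needed to make $F$ c.e.\ in the direction $(2)\Rightarrow(1)$.
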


In particular, if $f$ is a computable and is a homeomorphism, then it is is effectively open if, and only if, $f^{-1}$ is computable. In this case we say that $f$ is a computable homeomorphism.

We also say that two computable metrizations on the same Polish space are effectively compatible if the identity map on the space  is a bi-computable homeomorphism when viewed as a map from the first metrization to the second metrization under consideration.

\begin{definition}\label{def:effcomp}
A compact computable Polish  space is \emph{effectively compact} if there is a (partial) Turing functional that given a countable cover of the space outputs it finite subcover (and is undefined otherwise).
\end{definition}

This is equivalent to saying that, for every $n$, we can uniformly produce at least one finite open $2^{-n}$-cover of the space by basic open balls; see \cite[Remark 2.5]{Lupini.etal:21}.  The following elementary fact is well-known:

\begin{lemma}
A computable image of an effectively compact space is itself effectively compact.
\end{lemma}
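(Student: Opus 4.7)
The plan is to reduce finding a finite subcover of the image to finding a finite subcover of $X$, via the effective preimage operation on open sets. I read the statement as: if $f\colon X\to Y$ is an effectively continuous surjection between computable Polish spaces and $X$ is effectively compact, then $Y$ is effectively compact. I shall exhibit a Turing functional turning any c.e.\ name of an open cover of $Y$ into a finite subcover, which is the characterisation of effective compactness from Definition~\ref{def:effcomp}.

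First, I pull the cover back. Given a c.e.\ name $W$ of a cover $\+U=\{V_i:i\in W\}$ of $Y$ by basic open balls $V_i=B^Y_i$, Lemma~\ref{le: computable map} supplies an enumeration operator $\Phi$ that, on input a name of an open subset of $Y$, lists a name of its $f$-preimage in $X$. I run $\Phi$ separately on each $V_i$ as $i$ is enumerated into $W$, and tag every basic ball $B^X_j$ that $\Phi$ lists for $V_i$ with its source index $i$. The untagged output is a c.e.\ name of the open family $\{f^{-1}(V_i):i\in W\}$, which covers $X$ because $f^{-1}$ commutes with unions and $\+U$ covers $Y$.

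Second, I invoke effective compactness of $X$. By Definition~\ref{def:effcomp} there is a Turing functional that, given a c.e.\ name of an open cover of $X$, returns a finite subcover. Applied to the pulled-back cover, it yields finitely many indices $j_1,\dots,j_n$ of basic $X$-balls whose union is $X$. Using the tags, I read off a finite set $F=\{i_1,\dots,i_n\}\sub W$ with $\bigcup_{i\in F}f^{-1}(V_i)=X$. Surjectivity of $f$ then gives $\bigcup_{i\in F}V_i\supseteq f(X)=Y$, so $\{V_i:i\in F\}$ is the desired finite subcover of $\+U$.

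The construction is uniform in the input $W$, as required. There is no deep obstacle: the whole argument is the effective counterpart of the classical fact that a continuous image of a compact space is compact. The only point deserving care is the bookkeeping that attaches a source index $i\in W$ to every basic $X$-ball that $\Phi$ ever produces, so that the finite subcover of $X$ returned by the effective-compactness functional can be translated back into a finite subset of the original index set $W$ for $Y$.
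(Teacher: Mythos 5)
Your argument is correct and is essentially the paper's own (one-line) proof spelled out in full: pull the given cover of $Y$ back through the effectively continuous map, use effective compactness of $X$ to extract a finite subcover, and push it forward using surjectivity. The extra bookkeeping with source tags is a reasonable way to make the last step explicit, but the underlying route is identical.
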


\begin{proof} List basic open balls in the image until their preimages finally cover the domain of the computable map. We will also use that the inverse of bijective computable map $f:X \rightarrow Y$, where $X$ and $Y$ are effectively compact, is also computable. Also, it is well-known that both the supremum and the infinum of a computable function $f: X \rightarrow \mathbb{R}$ is computable provided that $X$ is effectively compact, and this is uniform.  \end{proof} 

\subsection{Splitting a space into clopen components}
Given a computable metric space, one says  that a basic open ball $B_r(x)$ is \emph{formally contained} in a basic open ball $B_q(y)$ if $d(x,y) +q < r$; this is $\Sigma^0_1$ in the given parameters. Two balls are \emph{formally disjoint} if  the distance between their centres exceeds the sum of their radii; this is also $\Sigma^0_1$.

\begin{lemma}\label{lem:clopen}
Suppose  $M$ is effectively compact. Then there is a computable enumeration of all clopen splits of $M$ (perhaps, with repetition).
\end{lemma}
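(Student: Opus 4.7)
The plan is to enumerate clopen splits of $M$ as pairs $(\mathcal B_1,\mathcal B_2)$ of finite collections of basic open balls such that $U=\bigcup \mathcal B_1$ and $V=\bigcup \mathcal B_2$ partition $M$ into two clopen pieces. I would dovetail over all such pairs and, in parallel, run two $\Sigma^0_1$ verifications: (i) every ball in $\mathcal B_1$ is formally disjoint from every ball in $\mathcal B_2$, which is $\Sigma^0_1$ as noted just before the lemma; (ii) $\mathcal B_1\cup \mathcal B_2$ covers $M$, which is c.e.\ by effective compactness, since the complement of a finite union of basic open balls is an effectively closed subset of an effectively compact space, and the emptiness of such a set is $\Sigma^0_1$. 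As soon as both witnesses appear, the pair is emitted.

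For soundness, formal disjointness forces $\bigcup \mathcal B_1\cap \bigcup \mathcal B_2=\emptyset$, and the cover condition gives $\bigcup \mathcal B_1\cup \bigcup \mathcal B_2 = M$. Hence each of $U,V$ is the complement of the other in $M$, so both are clopen, and together they form a genuine split.

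For completeness, let $M=U\sqcup V$ be an arbitrary clopen split. The degenerate cases ($U$ or $V$ empty) are handled by enumerating $(\mathcal B_1,\emptyset)$ for covers $\mathcal B_1$ of $M$ produced from effective compactness, and symmetrically. Otherwise, compactness of $M$ makes $U$ and $V$ disjoint compact sets, so $d(U,V)>0$. Setting $\delta:=d(U,V)/4$ and using density of the special points, each $x\in U$ lies in some basic ball $B_\delta(s)$ with $d(s,x)<\delta/2$, and likewise each $y\in V$ lies in some $B_\delta(t)$ with $d(t,y)<\delta/2$. A short triangle inequality then shows $d(s,t)>3\delta$, so any $U$-side ball is formally disjoint from any $V$-side ball. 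Compactness of $U$ and $V$ yields finite subcovers $\mathcal B_1,\mathcal B_2$ whose union covers $M$, so this pair passes both tests and is eventually enumerated. Finally, since $\bigcup \mathcal B_1$ is open and disjoint from $\bigcup \mathcal B_2\supseteq V$, it is contained in $M\setminus V=U$, and being a cover of $U$ it equals $U$; thus the recovered split coincides with the given one.

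The main obstacle I anticipate is ensuring that the $\Sigma^0_1$ witnesses really pin down a partition rather than merely a cover whose two sides happen to overlap. The formal disjointness condition is precisely what excludes such overlaps uniformly, while effective compactness converts the otherwise $\Pi^0_1$ covering relation into a $\Sigma^0_1$ one, so that both conditions can be checked by a single dovetailed procedure. The enumeration will list the same split under many different ball decompositions, but this repetition is explicitly permitted by the statement, so no deduplication is needed.
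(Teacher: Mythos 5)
Your proof is correct and takes essentially the same route as the paper's: both enumerate finite collections of basic balls split into two formally disjoint halves covering $M$, using effective compactness to make the covering condition $\Sigma^0_1$ and the positive distance between the two compact pieces to guarantee completeness. Your version is slightly more careful (degenerate splits, and verifying the recovered split equals the given one), but the underlying argument is the same.
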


\begin{proof} 
Suppose $M = X \sqcup Y$ is a clopen split, and let $\delta$ be the infimum-distance 
between these compact open sets
$$\delta = \inf_{(x,y) \in X \times Y} d(x,y).$$
(Since $X\times Y$ is compact and $d$ is continuous, it attains its infimum at some pair $(x_0, y_0)$. In particular, $\delta>0$.)

 Suppose $0< \epsilon< \delta/4$. Then every finite $\epsilon$-cover will consist of two formally disjoint subsets of basic open balls. Indeed, every ball covering a point in $X$ cannot contain a point in $Y$, and every ball covering a point in $Y$ cannot contain a point in $X$. If a basic open $B$ has its centre in $X$ and $D$ has its centre in $Y$, then the distance between their centres is at least $\delta$, while the sum of their radii is at most $\delta/2< \delta$, making them  {formally disjoint}. On the other hand, if a finite open cover of $M$ consists of two formally disjoint subcovers,
then these subcovers induce a split of $M$ into clopen components.
Since  being formally disjoint is a c.e.~property, we can effectively list all such clopen splits.
 \end{proof}

The fact below (though not as stated) is due to M.~Hoyrup, T.~Kihara, and V.~Selivanov~\cite{Hoyrup.etal:20}.
It can also be recovered from  Brattka, le Roux, Miller, Pauly \cite{Brattka.etal:19}.

\begin{theorem}\label{thm:stuff}
Given an effectively compact Stone space $M$, one can effectively determine a computable, computably branching tree $T$ without dead ends and a computable homeomorphism $f: M \rightarrow [T]$.
\end{theorem}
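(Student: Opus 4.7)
The plan is to construct $T$ recursively in parallel with an assignment $\sigma \mapsto U_\sigma$ of nonempty clopen subsets of $M$, with $U_{\la\ra}=M$, such that at each $\sigma$ the children of $\sigma$ in $T$ correspond to a clopen partition of $U_\sigma$ into nonempty pieces each of diameter $< 2^{-|\sigma|-1}$. The homeomorphism $f\colon M \to [T]$ will then be the nested-intersection map sending $x$ to the unique descending chain $(\sigma_n)_n$ with $x \in U_{\sigma_n}$ for all $n$.

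The key preliminary observation is that each clopen piece arising from a split enumerated by Lemma~\ref{lem:clopen} is effectively compact, uniformly in its index. Indeed, such a piece is a finite union of basic open balls, and its complement in $M$ is also a finite union of basic open balls (namely, the complementary subcover); so the piece is simultaneously effectively open and effectively closed inside the effectively compact $M$, which forces it to be effectively compact (any open cover can be augmented by the effectively open complement, then a finite subcover of $M$ extracted and the complement discarded). In particular its diameter is a computable real by the fact noted in the proof after Definition~\ref{def:effcomp}. Moreover each such piece is itself an effectively compact Stone space, so we may relativize Lemma~\ref{lem:clopen} to it and enumerate all of its finite clopen partitions. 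We then grow $T$ level by level: at a node $\sigma$ we search through the enumeration of finite clopen partitions of $U_\sigma$ for one in which (i) every piece has diameter $< 2^{-|\sigma|-1}$, verifiable by approximating diameters from above, and (ii) every piece contains a special point from the dense sequence of $M$, verifiable by density. Both conditions are $\Sigma^0_1$, and since $U_\sigma$ is a nonempty compact totally disconnected metric space such a partition exists; the search therefore halts, and we declare its pieces to be the children of $\sigma$ in $T$. The resulting $T$ is computable, computably branching, and without dead ends.

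It remains to verify that $f$ is well-defined and a computable homeomorphism. Well-definedness follows because the pieces at each level partition $M$; injectivity holds because distinct $x,y$ are separated once $2^{-n} < d(x,y)$; and surjectivity holds because each infinite path through $T$ yields a nested sequence of nonempty effectively compact sets with diameters tending to $0$, whose intersection is a single point. Effective continuity of $f$ comes from the effective openness of each $U_\sigma$: from a name of $x$ one effectively determines which level-$n$ piece contains $x$. Since $[T]$ is effectively compact ($T$ being computably branching without dead ends) and so is $M$, the fact recalled after Definition~\ref{def:effcomp} on inverses of bijective computable maps between effectively compact spaces makes $f$ automatically a computable homeomorphism. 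The main obstacle is the uniform effective compactness of the clopen pieces noted above; once that is in hand, the rest is a routine descriptive-set-theoretic construction.
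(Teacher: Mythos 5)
Your proposal is correct and follows essentially the same route as the paper's proof: both rest on Lemma~\ref{lem:clopen}, the uniform effective compactness (hence computable diameters) of the clopen pieces, and the fact that a computable bijection between effectively compact spaces has a computable inverse. The only organizational difference is that you search at each node for a clopen partition whose pieces already have diameter $< 2^{-|\sigma|-1}$, whereas the paper refines by intersecting with the $i$th split of a fixed global enumeration and only afterwards computes, for each path and each $n$, a level at which the diameter has dropped to $2^{-n}$; this is cosmetic.
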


\begin{proof}
Suppose $X$ and $\neg X = M\setminus X$ are  clopen components represented as  finite unions of formally disjoint basic open balls, as in the proof of the lemma above. 
Given a special point $x$ in $M$, we can use these finite open names to wait and see whether $x$ in $X$ or $x$ in $\neg X$. This makes both $X$ and $\neg X$ computable closed subsets of the effectively compact $M$, and thus then can be viewed as effectively compact spaces. In particular, their diameters are computable reals. (Note this is uniform.)

\begin{lemma}
Given two (finite open names for) clopen sets $X$ and $Y$, as well as their complements $\neg X$ and $\neg Y$,
we can additionally decide whether $X \cap Y$ is empty, and if it is not empty, then output a finite open name of it and its complement.
\end{lemma}

\begin{proof} 
Search for an  $\epsilon$-cover of the space $M$, where $\epsilon$ is so small that every ball in the new cover is formally contained in some ball of each of the two covers that we fixed above (the first  for $X$ and $\neg X$, and the second for $Y$ and $\neg Y$). Such a cover must exist.
Then $X \cap Y$ is composed of those balls in the cover that are formally contained in balls corresponding to names of $X$ and of $Y$. If there are no such, then declare the intersection $X \cap Y$ empty.\
\end{proof}

To build the tree $T$, associate the empty string with $M$. Suppose $\sigma \in T$ of length $i$ has been defined, and suppose $\sigma$ has been associated with a clopen $X$. Let $X_i \sqcup \neg X_i $ be the $i$th clopen split of $M$ in the effective list of all such splits produced above. If both $X \cap X_i$ and $X \cap \neg X_i$ are non-empty, then create two children of $\sigma$, $\sigma\hat{}0$ and $\sigma\hat{}1$, and associate $X \cap X_i$ with 
$\sigma\hat{}0$ and $X \cap \neg X_i$ with $\sigma\hat{}1$. If only one of the $X \cap X_i$ and $X \cap \neg X_i$ is non-empty, say $X \cap X_i\neq \emptyset$, then create only 
$\sigma\hat{}0$  and associate it with $X \cap X_i$. 

It should be clear that $[T]$ is homeomorphic to $M$. We claim that it is computably homeomorphic to $M$. For that, not that for every $\xi \in T$ and any $n>0$, we can compute (uniformly in $\xi$) an $i$ such that the diameter of the clopen component associated with $\xi \upharpoonright i$ is at most $2^{-n}$.
We identify $[\sigma]$ with the clopen component of $M$ associated with $\sigma \in T$.

 Given a (not necessarily computable) point $x \in M$ and $n$, search for $\sigma \in T$ such that the component of $M$ associated with $\sigma$ has diameter at most $2^{-n}$ and $x \in [\sigma]$.  Output (any point in) $[\sigma]$. This gives a computable name of a surjective computable $f$ (that can be viewed as the identity map) between the effectively compact spaces $M$ and $[T]$. Since both spaces are effectively compact, $f^{-1}$ is also computable.
\end{proof}

\begin{remark}\label{rem:squishy_metric_after_military_service}
We see that, under a careful choice of notation in the end of the proof above, $f$ can be viewed the identity map on~$M$. In other words, the metric induced by $T$ is \emph{effectively compatible} with the original metric on $M$. In particular, any operation defined on $M$ that is computable wrt the old metric will also be computable wrt the new ultrametric induced by $T$.
\end{remark}

We shall use the following observation. Write $A \cong_{comp} B$ is $A$ is computably homeomorphic to $B$.

\begin{remark}\label{rem:stuff}
Suppose $M = C \sqcup D$ is effectively compact, where $C$ and $D$ are clopen and effectively compact. Let $T_0$ and $T_1$ be computably finitely branching trees with no dead ends such that $C \cong_{comp} [T_0]$ and $D \cong_{comp} [T_1]$. Define a new tree $T$ by adjoining the successors of the root of $T_1$ to the root of $T_0$. Then $M \cong_{comp} [T]$ in a   uniform way. 
\end{remark}

\subsection{Computable Baire vs computable $\sigma$-compact open.}
 \begin{definition} \label{def:comploccompact} Let $T$  be a computable subtree of $ \NN^*$ without leaves. We say that  $T$ is \emph{computably  locally compact (c.l.c.)}~if 
\begin{enumerate} \item the space $[T]$ is locally compact, 
\item   the set  
   $  \{ \sss\in T \colon [\sss]_T \text{ is compact}\}$ is decidable, and 
   \item the tree of extensions of each string $\sss $  such that $[\sss]_T \text{ is compact}$
     is uniformly computably branching. More formally, 
 there is a computable binary   function $h$ such that,  if   $ [\sss]_T $ is compact  and $\rho \in T$ extends $\sss$, then  $\rho(i) \le H(\sss, i)$ for each $i < |\rho|$.       \end{enumerate}
\end{definition}

\begin{definition}\label{def:main1}  
       Let $G$ be a Polish t.d.l.c.~group. A \emph{computable Baire~presentation} of $G$
    is a topological group $\hat G \cong G$ of the form  $\hat G= ([T], \Op, \Inv)$ such that
  \begin{enumerate} \item  $T$ is computably   locally compact;
  \item  the group operations, $\Op\colon [T] \times [T] \to [T]$ and $\Inv \colon [T] \to [T]$ are computable;

 \end{enumerate}
  \end{definition}

Note that the tree $[T]$ from Def.~\ref{def:main1} induces a computable, complete metric on $G$. It is not difficult to list a computable dense sequence of points in $[T]$. 
This clarifies (2)  above. The exact method of choosing  a computable dense sequence does not change computability of (2), as long as the resulting (computable metrized) presentation is compatible with the computable topology of the tree.

Note that the metric induced by $T$ is an ultrametric. 
There are     other ways to metrise a totally disconnected space;  the most well-known example is the metric on Cantor space $2^{\omega}$ induced from the standard Euclidean metric on $[0,1]$.

Recall that every locally compact Polish space is $\sigma$-compact, i.e., is the union of a countable ascending sequence of its compact subsets.
We say that a locally compact space $M$ is \emph{computably $\sigma$-compact} if 
$$ M = \varinjlim C_i,$$
where $(C_i)_{i \in \omega}$ is a uniformly computable sequence of (uniformly) effectively compact spaces, and the limit is taken with respect to the injective inclusion maps $f_i: C_i \rightarrow C_{i+1}$ which are also uniformly computable.
Note that $ M = \varinjlim C_i$ is a computable Polish space.

\begin{definition} A separable tdlc group $G$ is computably $\sigma$-compact-open
if the underlying space is  computably $\sigma$-compact as witnessed by  $(C_i, f_i)_{ i \in \omega}$, and additionally:

\begin{enumerate}
\item $f_i(C_i)$ is open in $C_{i+1}$;

\item The group operations are computable on $G = \varinjlim C_i$.

\end{enumerate}

\end{definition}

\begin{lemma}
On $G = \varinjlim C_i$,   left and right translations by a point $\xi$ are uniformly computably open relative to $\xi$.
\end{lemma}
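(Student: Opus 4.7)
The plan is to exploit the fact that the left translation $L_\xi\colon g\mapsto \xi g$ is a self-homeomorphism of $G$ whose set-theoretic inverse is $L_{\xi^{-1}}$. If both $L_\xi$ and $L_{\xi^{-1}}$ are computable relative to a name $\alpha$ of $\xi$, then by part (2) of Lemma~\ref{le: computable map} each of them, given a name of an open set, enumerates a name of its preimage. Since $L_\xi(A) = L_{\xi^{-1}}^{-1}(A)$ for every $A\subseteq G$, this immediately yields an enumeration operator that turns a name of an open $A$ into a name of $L_\xi(A)$; by Lemma~\ref{le: eff open}, that is exactly what effective openness of $L_\xi$ means.

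First I would note that, since $\Op\colon G\times G\to G$ is computable on the computable Polish space $G=\varinjlim C_i$, the map $L_\xi = \Op(\xi,\cdot)$ is computable relative to any name of $\xi$, uniformly in that name: the enumeration operator promised by Lemma~\ref{le: computable map}\,(3) for $\Op$ can be partially evaluated at the oracle $\alpha$ to give the corresponding operator for $L_\xi$. Next, using that $\Inv$ is computable, one passes from $\alpha$ to a name of $\xi^{-1}$, so $L_{\xi^{-1}} = \Op(\xi^{-1},\cdot)$ is also computable relative to $\alpha$, again uniformly.

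The final step is to apply part (2) of Lemma~\ref{le: computable map} (relativized to $\alpha$) to $L_{\xi^{-1}}$: this yields an enumeration operator $\Phi$ such that, on oracle $\alpha$ together with a name $W$ of an open $A\subseteq G$, $\Phi^{\alpha}(W)$ enumerates a name of $L_{\xi^{-1}}^{-1}(A) = \xi A = L_\xi(A)$. By Lemma~\ref{le: eff open}, this means $L_\xi$ is effectively open, uniformly in $\alpha$. The symmetric argument applied to $R_\xi = \Op(\cdot,\xi)$ and $R_{\xi^{-1}} = R_\xi^{-1}$ handles right translation.

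There is no serious obstacle here: once the computably $\sigma$-compact-open hypothesis places $\Op$ and $\Inv$ at our disposal on the computable Polish space $G$, the rest is formal bookkeeping of names through the two preceding equivalences. The only thing to double-check is that uniformity in $\alpha$ is preserved at each step, which is straightforward because each lemma invoked supplies an actual Turing functional on names rather than just an existence statement.
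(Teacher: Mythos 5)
Your argument is correct and is essentially the paper's own proof: the paper likewise observes that $\xi^{-1}$ is computable from $\xi$, that translation by $\xi^{-1}$ is therefore computable relative to $\xi$, and that computability of the inverse homeomorphism yields effective openness of $L_\xi$ (via the equivalences in Lemmas~\ref{le: computable map} and~\ref{le: eff open}). You have merely spelled out the bookkeeping of names that the paper leaves implicit.
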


\begin{proof}
Recall that $\xi^{-1}$ is computable from $\xi$, and $f_\xi f_{\xi^{-1}} = Id$.
Since the product is computable, the left translation by $\xi^{-1}$ is computable in $\xi$.  This makes the left translation by $\xi$ effectively open. The same holds for the right translation.\end{proof}

The lemma   applies to Baire presentations as well, as a special case. See the first half of the proof of the theorem below. 

\

\begin{theorem} For a separable tdlc group $G$, the following are equivalent:

\begin{enumerate}
\item $G$  is computably $\sigma$-compact-open;

\item $G$ has a computable Baire presentation.
\end{enumerate}
Furthermore, this correspondence is   uniform.
\end{theorem}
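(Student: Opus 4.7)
I will exhibit both implications by explicit constructions that use only the presentation data, so uniformity will be visible.

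For $(1)\Rightarrow(2)$, suppose $G = \varinjlim C_i$ is computably $\sigma$-compact-open. Define the ``shells'' $D_0 = C_0$ and $D_i = C_i \setminus f_{i-1}(C_{i-1})$ for $i \ge 1$. The first task is to verify that each $D_i$ is effectively compact, uniformly in $i$. Since $f_{i-1}$ is computable on the effectively compact $C_{i-1}$, the image $f_{i-1}(C_{i-1})$ is effectively compact, and by hypothesis it is open in $C_i$; so it is clopen in $C_i$. Running the $\epsilon$-cover search from the proof of Lemma~\ref{lem:clopen} inside $C_i$ yields finite basic open names that formally split $C_i$ into $f_{i-1}(C_{i-1})$ and $D_i$, witnessing uniform effective compactness of $D_i$. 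Because $G$ is tdlc, each $D_i$ is a Stone space, so Theorem~\ref{thm:stuff} applied uniformly produces a computably branching tree $T_i$ with no dead ends and a computable homeomorphism $D_i \cong_{comp} [T_i]$.

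Next I assemble $T$ by iterating the gluing of Remark~\ref{rem:stuff}: the root $\epsilon$ of $T$ has children $0,1,2,\dots$, and the subtree above child $i$ is a copy of $T_i$. Then $[T] \cong \bigsqcup_i D_i = G$ with a computable homeomorphism. For $\sigma \in T$ with $|\sigma|\ge 1$ the cone $[\sigma]_T$ sits inside $T_{\sigma(0)}$, hence is compact, whereas $[\epsilon]_T = G$ is not compact; so $\{\sigma \in T \colon [\sigma]_T\text{ compact}\} = \{\sigma \colon |\sigma|\ge 1\}$ is decidable, and the branching above it is bounded by the computably-branching functions of the $T_i$'s uniformly in $\sigma(0)$. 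Thus $T$ is computably locally compact, and transporting $\Op,\Inv$ along the computable homeomorphism gives a Baire presentation.

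For $(2)\Rightarrow(1)$, take a Baire presentation $\hat G=([T],\Op,\Inv)$ and let $K=\{\sigma\in T\colon [\sigma]_T\text{ compact}\}$, which is decidable. Enumerate $K$ computably as $\tau_0,\tau_1,\dots$ and set $C_i=\bigcup_{j\le i}[\tau_j]_T$. Each $[\tau_j]_T$ is effectively compact since the tree above $\tau_j$ is uniformly computably branching (Def.~\ref{def:comploccompact}), so each $C_i$ is a finite union of effectively compact clopen sets, hence effectively compact and clopen in $[T]$, uniformly in $i$. The inclusions $f_i\colon C_i\hookrightarrow C_{i+1}$ are the identity on points, hence uniformly computable, and $f_i(C_i)=C_i$ is clopen in $C_{i+1}$. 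Local compactness of $[T]$ guarantees that every point has a prefix in $K$, so $\bigcup_i C_i=G$. The direct-limit topology coincides with the tree topology because the $C_i$ are open and exhaust $G$, and the computability of $\Op,\Inv$ on $[T]$ transfers verbatim to $\varinjlim C_i$.

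Both directions are visibly uniform in the input data, establishing the final clause. The principal obstacle is the effective-compactness step in $(1)\Rightarrow(2)$: passing from ``$f_{i-1}(C_{i-1})$ is an effectively compact open subset of $C_i$'' to a \emph{finite} open name compatible with the formally-disjoint splitting required by the proof of Lemma~\ref{lem:clopen}. Once that uniform extraction is carried out, the Stone-tree gluing via Theorem~\ref{thm:stuff} and Remark~\ref{rem:stuff}, together with the routine enumeration argument in $(2)\Rightarrow(1)$, completes the proof.
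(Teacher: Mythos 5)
Your proof is correct, and one half of it takes a genuinely different route from the paper. For $(1)\Rightarrow(2)$ you and the paper do essentially the same thing: split each $C_{i+1}$ into $f_i(C_i)$ and its complement using the formally-disjoint cover search of Lemma~\ref{lem:clopen}, apply Theorem~\ref{thm:stuff} to the resulting effectively compact Stone pieces, and glue the trees; the paper glues by realizing $T_i$ as a subtree of $T_{i+1}$ via Remark~\ref{rem:stuff} and setting $T=\bigcup_i T_i$, whereas you disjointify into shells $D_i$ and hang each shell's tree under a separate child of the root --- a cosmetic reorganization yielding the same kind of tree. (Note that both versions share the same small wrinkle: deciding whether $[\epsilon]_T$ is compact amounts to deciding whether $G$ is compact, which is one bit of non-uniform advice.) For $(2)\Rightarrow(1)$, however, the paper fixes a compact open subgroup $U\le G$ (van Dantzig) and takes $C_i=\bigcup_{j\le i}x_jU$ to be unions of cosets, while you enumerate the decidable set of nodes $\tau$ with $[\tau]_T$ compact and take $C_i=\bigcup_{j\le i}[\tau_j]_T$. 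Your version buys transparency and uniformity: it needs no group theory at all, only the decidability clause of Definition~\ref{def:comploccompact} and the observation that local compactness forces every branch to pass through a compact cone, whereas the paper's version must (at least implicitly) locate a compact open subgroup effectively in the presentation. The paper's coset decomposition has the aesthetic advantage that the exhaustion is by translates of a fixed subgroup, but nothing in the theorem requires that.
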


\begin{proof}
(2)$\to$(1). We may identify  $G$  with its computable Baire presentation. So $G = [T]$ for an effectively locally compact  tree $T$. Note that $T$ induces a computable, complete metric on $G$. We can also list a computable dense subset $(x_i)_{i \in \omega}$ of $G$. To define $C_i$, fix any compact open subgroup $U$ of $G$. By the choice of $T$, it is an effectively compact space. Then note that, since $U$ is open,  every (left) coset of $U$ has the form   $\xi U$, where $\xi$ is a special point.
Define $C_i = \bigcup_{j\leq i} x_i U$. The inclusion maps   $C_i \to C_{i+1}$ are evidently computable, and each $C_i$ is an effectively compact open set in a uniform way. The group operations are computable on $[T]$ and remain computable on the direct limit of $C_i$, since   one can view $[T]$ as the result of taking the direct limit.

\

(1)$\to$ (2). Suppose $G$ is computably $\sigma$-compact-open. Observe that each $C_i$ is an effectively compact Stone space.  By Theorem~\ref{thm:stuff}, there is a   computable procedure that, given an effectively compact Stone space $C_i$, outputs a computably branching, computable tree $T_i$ with no dead ends such that $C_i \cong_{comp} [T_i]$. 

Observe also that, from  $i$, one can compute a code (i.e., as a finite union of basic open balls) for $f_i(C_i)$  as  a clopen subset of $C_{i+1}$ ; this is because it is a computable homeomorphic image of an effectively compact space inside an effectively compact space. For its    complement  in  $C_{i+1}$   one can also compute such a code: as in  the proof of Lemma~\ref{lem:clopen}, list open finite covers of $C_i$ and search for a cover of the whole $C_{i+1}$ that is composed of the cover of $C_i$ and finitely many balls formally disjoint from it. We are therefore in the position to apply Remark~\ref{rem:stuff}.

We see that the tree $T_{i}$ that can be uniformly  produced for $C_{i}$ (by Theorem~\ref{thm:stuff}) can be viewed as a subtree of $T_{i+1}$ corresponding to $C_{i+1}$; this is Remark~\ref{rem:stuff}. It follows that we can define $T$ to be $\bigcup_{i \in \omega} T_i$. 
By Remark~\ref{rem:squishy_metric_after_military_service} the original metric on $C_i$ is uniformly effectively compatible with the new ultrametric induced by the tree $T_i$.  It follows that the group operations remain computable and effectively open on $[T]$.
Since $T_i$ are  computably branching uniformly in $i$, $T$ is computably locally compact.\end{proof}

We sketch an application to showing that certain t.d.l.c.\ groups are computable.
\begin{example} Let $G$ be an algebraic group over $\QQ_p$, e.g.\ $SL_n(\QQ_p)$. Then $G$ has a computable Baire presentation. \end{example}

\begin{proof} $G$ is given as the set of matrices in $M_n(\QQ_p)$ with components satisfying finitely many polynomial equations over $\QQ_p$ (such as $\det A =1$). 

Given $i \in \NN$ let $C_i$ be the set of matrices in $G$ such that each entry is in $p^{-i}\ZZ_p$.  With the matrix norm given by the  maximum distance between corresponding matrix components, $C_i$ is effectively compact uniformly in $i$, and  $G = \bigcup_i C_i$. The inclusion maps $f_i$ are trivially computable and have open range. Clearly $G$ is computably $\sss$-compact-open via this union. 
\end{proof}

 \part{FA-presentable structures}
  \section{IMS workshop, problem session on FA-presentable structures}

At the IMS automata and learning theory workshop in September the following open questions were discussed.

\begin{question}[Stephan]
Is the isomorphism problem for FA presentable structures  in the following classes decidable?

1. Successor structures with a unary predicate

2. Abelian groups.

\end{question} 

\begin{question}[Stephan] Is every f.g. semi-automatic  group Cayley automatic? \end{question}

\begin{question}[Andr\'e Nies]
Is each     torsion-free FA presentable group virtually abelian?  \end{question}

Isomorphism of FA presentable equivalence structures is $\Pi_1$ complete as a set  of pairs (Dietrich Kuske, Jiamou Liu and Markus Lohrey, Trans. Amer. Math. Soc. 365 (2013), 5103-5151). 

\begin{question}[Andr\'e Nies] Is isomorphism of FA presentable equivalence structures  $\Pi_1$ complete as an equivalence relation? \end{question}
\begin{question}[Andr\'e Nies] Let $p$ be some prime. Is there an indecomposable FA presentable subgroup of rank $\ge 2$ of $\ZZ[1/p]^k$?  \end{question}

\begin{question}[Dimitry Berdinsky]  Is there a Cayley automatic representation of a nonabelian nilpotent group $\pi: L \rightarrow G$
for which the domain $L$ has polynomial growth (equivalently $L$ is simply starred)? \end{question}

\begin{question}[Murray Elder] 
Is some nilpotent of class $>2 $ group  Cayley automatic? \end{question}

Stephan and others have studied semi-automatic groups: only the inverse and the transvections are automata presentable. They have  produced a properly  3-nilpotent semi-automatic  group.  Cayley automatic implies semi-automatic. They ask whether the converse holds. If so this answers the question in the affirmative.

\begin{question}[Philipp Schlicht] Is there an algorithm to determine  the ordinal type of  a tree automatic  well-order? \end{question}

Schlicht also asked what one can  say about the Morley rank of an FA-presentable structure.  
  
%
%
%

 \part{Set theory}
           
           \section{Yu: Some consequences of Turing determinacy and strong Turing determinacy}
This is  \href{https://arxiv.org/abs/2107.10470}{joint work} with Liuzhen Wu and Yinhe Peng.  \begin{definition}
Let $sTD$, strong Turing determinacy, be that for any subset $A$ of reals ranging Turing degrees cofinally, $A$ has a pointed subset~$T$. 
\end{definition}

We can prove the following results of Woodin.
\begin{theorem}[Woodin]
Assume $ZF+sTD$,
\begin{itemize}
\item Every set of reals is measurable.
\item Every set of reals has Baire property.
\end{itemize}

If additionally  $DC_{\mathbb{R}}$, then every uncountable set of reals has a perfect subset.
\end{theorem}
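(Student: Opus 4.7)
The plan is to exploit the Turing-cone dichotomy obtained by contraposing sTD: for every $A \subseteq 2^{\omega}$, either $A$ contains a pointed perfect subset, or there is a Turing degree $d$ with $A \cap \{x : x \geq_T d\} = \emptyset$. I combine this with the two classical ZF facts that every Turing cone is Lebesgue conull and comeager, so that any set disjoint from a cone automatically has outer measure zero and is meager.

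For Lebesgue measurability I proceed via a local-density defect set. Given $A \subseteq 2^{\omega}$, let $D_A$ be the set of reals $x$ at which the local outer density of $A$ fails to converge along shrinking basic neighborhoods of $x$. Apply the dichotomy to $D_A$: if $D_A$ contained a pointed perfect subset $P$, then relativizing the ``cones are conull'' theorem to a code of $P$ would produce, inside the cone above that code, elements whose local density relative to $A$ is forced by the effective content of the code to converge, contradicting membership in $D_A$. Hence $D_A$ is disjoint from a cone, so $\mu^{*}(D_A)=0$; a standard approximation argument then upgrades this to Lebesgue measurability of $A$.

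The Baire property is proved by the parallel argument with meager/comeager replacing null/conull: one replaces $D_A$ by the local-category defect set, applies the dichotomy, and appeals to the comeagerness of Turing cones. For the perfect set property under $DC_{\RR}$, given uncountable $A$, I would use the Lebesgue measurability just established to split $A$ into a null part $A_0$ and a part $A_1$ of positive outer measure in every basic open it meets. If $A_1$ is nonempty then it is cofinal in the Turing degrees (every set of positive outer measure meets every conull cone), and sTD delivers a pointed perfect subset of $A_1 \subseteq A$. If $A_1$ is empty then $A$ itself is null, and the hard step is to extract a perfect subset of this uncountable null set; this is the main obstacle and would be handled via an iterated application of sTD using $DC_{\RR}$ to diagonalize against countable approximations enumerated inside $A$, ultimately producing the required perfect subset.
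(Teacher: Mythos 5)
Your proposal rests on a false premise, and essentially everything downstream of it collapses. Turing cones above noncomputable degrees are \emph{null} and \emph{meager}, not conull and comeager: by the theorem of Sacks (following de~Leeuw--Moore--Shannon--Shapiro), if $\lambda(\{x : x \geq_T d\})>0$ then $d=0$, and the category analogue says the same with ``non-meager'' in place of positive measure. Consequently ``$A$ is disjoint from a cone, hence $\mu^*(A)=0$ and $A$ is meager'' is exactly backwards: the set $\{x : x \not\geq_T \emptyset'\}$ is disjoint from the cone above $\emptyset'$ yet is conull and comeager. This destroys the treatment of the density-defect set $D_A$, the parallel category argument, and also the perfect-set step, where you assert that a set of positive outer measure ``meets every conull cone'' and is therefore cofinal in the degrees (the same counterexample $\{x: x\not\geq_T \emptyset'\}$ has measure one and is bounded away from the cone above $\emptyset''$). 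Finally, even on its own terms the PSP argument is incomplete: the case of an uncountable null set is flagged by you as ``the main obstacle'' and is not actually handled.

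For comparison, the proof being sketched in the paper runs through \emph{relativized} randomness and genericity rather than through cones being large. Roughly: for a set $A$ and a basic interval one applies $sTD$ to sets of oracles $z$ defined by the behaviour of $A$ on the $z$-randoms (resp.\ $z$-generics) in that interval; cofinality of such oracle sets is obtained from the Ku\v{c}era--G\'acs fact that every degree above $z'$ contains a $z$-random, and a pointed subset then yields uniform measure (resp.\ category) estimates for $A$ relative to all sufficiently powerful oracles, from which measurability and the Baire property follow. For the perfect subset property the paper follows Sami's route: $sTD$ rules out Bernstein sets, and $DC_{\mathbb{R}}$ is used to convert the non-existence of a Bernstein set into the statement that every uncountable set has a perfect subset. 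You would need to rebuild your argument along these lines; the cone-largeness shortcut is not available.
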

\begin{proof}[Sketch of proof]
Both (1) and (2) are proved via classical randomness/genericity theory. The handwritten notes can be found in Yu's homepage.

To prove perfect set property ($PSP$), one needs prove that non-existence of Bernstein set implies $PSP$ by applying $DC_{\mathbb{R}}$ (the idea is from Sami \cite{Sami:89}). The proof can also be found in Yu's homepage.
\end{proof}

It is a long standing question whether $ZF+TD$ (or $ZF+AD$) implies $DC_{\mathbb{R}}$.

\begin{definition}
Let $wDC_{\mathbb{R}}$ be the statement that for any binary relation $R\subseteq \mathbb{R}^2$ with the property that $\forall x (\mu(\{y\mid R(x,y)\})>0)$, there is a sequence $\{x_n\}_{n\in \omega}$ so that $\forall n R(x_n,x_{n+1})$.
\end{definition}

\begin{proposition}
$ZF+TD\vdash wDC_{\mathbb{R}}$.
\end{proposition}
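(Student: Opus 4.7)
The plan is to use the Martin cone measure $\mathbf{t}$ on Turing degrees, which under $TD$ is a countably complete ultrafilter (by Martin's classical theorem), to supply the countably many choices required by $wDC_{\mathbb{R}}$ in place of $DC_{\mathbb{R}}$ itself.

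\textbf{Step 1.} Fix $R$ with $\mu(B_x)>0$ for all $x$, where $B_x=\{y:R(x,y)\}$. By the theorem just proved, $R$ is Lebesgue measurable on $\mathbb{R}^2$, and in particular $B_x$ is nonempty for every $x$.

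\textbf{Step 2.} For each $x$, nonemptyness of $B_x$ implies that the set
\[ D_x=\{\mathbf{d}:\mathbf{d}\ \text{computes some element of}\ B_x\}\]
contains the cone above $\deg(y_0)$ for any single $y_0\in B_x$, hence $D_x\in\mathbf{t}$. This gives us, for every $x$, a $\mathbf{t}$-generic procedure for naming an $R$-successor in terms of a sufficiently large Turing degree.

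\textbf{Step 3.} Set $x_0=0$. To build $x_{n+1}$ from $x_n$, pass to the Martin ultrapower by $\mathbf{t}$: there one can canonically refer to a generic degree lying in every element of $\mathbf{t}$. Countable completeness of $\mathbf{t}$ allows this selection to be iterated for all $n\in\omega$ simultaneously, producing an entire witnessing sequence $(x_n)$ below a single $\mathbf{t}$-generic degree $\mathbf{d}^\ast$. Within $\mathbf{d}^\ast$, the canonical representatives $x_n$ are extracted from a fixed enumeration of that (countable) degree, so no further choice is needed.

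\textbf{Main obstacle.} The central difficulty is a circularity: the set $D_{x_n}$ depends on $x_n$, which is itself part of the construction being built. The intended workaround is to perform the induction inside the ultrapower rather than in $V$, treating the entire witnessing sequence as one element of the ultrapower whose existence follows from countable completeness of $\mathbf{t}$ applied to the countably many ``extend by one step'' requirements, and then decoding this ultrapower element back into an actual sequence of reals via a canonical selector on degrees. This is precisely the step where the hypothesis $TD$ (rather than just ``every set of reals is measurable'') is essential, since the ultrafilter replaces the countable-choice principle that $DC_{\mathbb{R}}$ would otherwise provide.
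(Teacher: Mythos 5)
The paper itself only records that the proof ``is based on some results from higher randomness theory'' and defers the details, so there is no written argument to compare against; but your proposal has a decisive gap that can be identified independently. The positive-measure hypothesis on the sections $B_x$ enters your argument only through Step 1, and there it is used only to conclude that each $B_x$ is nonempty (the detour through measurability is both unnecessary and illegitimate, since the measurability theorem quoted assumes $sTD$, not $TD$). If the rest of your argument were sound, it would therefore establish $DC_{\mathbb{R}}$ for every total relation from $ZF+TD$ alone --- but the paper explicitly notes, immediately after this proposition, that whether $ZF+TD$ implies $DC_{\mathbb{R}}$ is a long-standing open problem. So the measure hypothesis must do real work somewhere, and in your write-up it does none; this is why the intended proof goes through randomness theory.

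The precise point of failure is Step 3. Countable completeness of the Martin measure lets you intersect a \emph{given} countable family of cone-containing sets; here the sets $D_{x_n}$ are produced by a recursion in which $x_{n+1}$ depends on a choice of witness made at stage $n$, so producing the family $\seq{D_{x_n}}$ is itself an instance of dependent choice. Saying that one ``performs the induction inside the ultrapower'' and treats the witnessing sequence as a single ultrapower element does not break this circle: to exhibit such an element you need a function assigning to (cofinally many) degrees an entire $R$-chain computed by them, which is exactly the object whose existence is at issue. The standard way to make the recursion choice-free is to fix a single real $z$ and define $x_{n+1}=\Phi_e^z$ for the \emph{least} $e$ with $\Phi_e^z\in B_{x_n}$; this works once one knows that the degree-invariant set $C=\{z: \forall x\leq_T z\,\exists y\leq_T z\; R(x,y)\}$ contains a cone. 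By $TD$, the alternative is that the complement of $C$ contains a cone, and ruling that out is precisely where one must exploit that each $B_x$ has positive measure (via relativized randomness/density arguments) rather than mere nonemptiness. Your proposal stops short of this step, which is the entire content of the proposition.
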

\begin{proof}
The proof is based on some results from higher randomness theory. The full details will appear in somewhere else.
\end{proof}

Note that we are also able to prove the consequence if one replaces positive measure with  having Baire property and nonmeager. 

We are also able to prove the following result.

\begin{theorem}\label{theorem: capacity}
Assume $Zf+sTD $, for any set $A$ of reals, there is a $F_{\sigma}$ subset $F\subset A$ so that $F$ and $A$ have the same Hausdorff dimension.
\end{theorem}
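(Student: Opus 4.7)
The plan is to reduce to a capacitability statement and then apply Frostman's characterization of Hausdorff dimension. Recall that for every $s \ge 0$ the Riesz $s$-capacity $C_s$ is a Choquet capacity on $\mathbb{R}$, and Frostman's theorem gives
\[
\dim_H(A) = \sup\{s \ge 0 : C_s(A) > 0\}.
\]
It therefore suffices to establish, under $ZF+sTD$, the following capacitability lemma: for every Choquet capacity $\gamma$ on $\mathbb{R}$ and every $A \subseteq \mathbb{R}$,
\[
\gamma(A) = \sup\{\gamma(K) : K \subseteq A,\ K \text{ compact}\}.
\]
Granted the lemma, for each $s < \dim_H(A)$ we choose compact $K_s \subseteq A$ with $C_s(K_s) > 0$, hence $\dim_H(K_s) \ge s$; taking a sequence $s_n \nearrow \dim_H(A)$ and forming $F = \bigcup_n K_{s_n}$ produces an $F_\sigma$ subset of $A$ with $\dim_H(F) = \dim_H(A)$.

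For the capacitability lemma I would follow the same template Yu uses above for Lebesgue measurability and the Baire property, substituting the Choquet capacity $\gamma$ for Lebesgue measure. Fix $A$ with $\gamma(A) > \alpha$ and $\epsilon > 0$. Define $\tilde A$ to be the set of reals $x$ coding, in some fixed effective manner, a nested sequence $(\sigma_n)$ of finite unions of rational closed intervals together with a point of $A \cap \bigcap_n \sigma_n$ witnessing that $\gamma(A \cap \sigma_n) > \alpha - 2^{-n}\epsilon$ at each level. Relativizing the point-to-set principle (or Frostman directly) makes $\tilde A$ cofinal in the Turing degrees. Applying $sTD$ produces a pointed perfect subtree $T \subseteq \tilde A$; because pointedness lets each branch uniformly recover the sequence of witnesses, the compact set $K$ obtained as the intersection of the approximating $\sigma_n$ along any branch of $T$ lies in $A$ and carries capacity at least $\alpha - \epsilon$. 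Sending $\epsilon \to 0$ and $\alpha \nearrow \gamma(A)$ along a countable schedule yields the full identity.

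The principal obstacle is the capacity-preservation step: a pointed perfect subtree is automatically of full effective complexity, but this gives no \emph{a priori} control over $\gamma$-capacity, so the capacity approximations must be hard-wired into the coding of $\tilde A$ from the start. This is the capacity analog of the classical observation, used in Woodin's measurability proof, that a pointed closed set is Lebesgue non-null; for Riesz $C_s$ one additionally needs to control the Frostman energy integral $\iint |x-y|^{-s}\,d\mu(x)\,d\mu(y)$ along the branching, which amounts to bounding the number and diameter of children of each node of $T$ compatibly with $s$. Once this is handled the passage from the compact $K_s$ to the $F_\sigma$ set $F$ is immediate from $\sigma$-compactness of $\mathbb{R}$, and no use of $DC_\mathbb{R}$ should be required.
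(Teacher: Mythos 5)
The blog entry actually states this theorem without proof (it only records that Crone--Fishman--Jackson obtained the conclusion from $ZF+AD+DC$ and that Slaman showed it can fail for a $\Pi^1_1$ set under $V=L$), so your proposal has to be judged on its own merits, and as written it breaks at the decisive step: extracting a compact subset \emph{of $A$} whose capacity is bounded below. The set $\bigcap_n \sigma_n$ along a branch of $T$ is compact and the quantities $\gamma(A\cap\sigma_n)$ stay large, but $\bigcap_n\sigma_n$ is not contained in $A$ --- only the single witness point is known to lie in $A$. If instead you take $K$ to be the set of witness points over all branches of $T$, then indeed $K\subseteq A$, but nothing controls its capacity: each witness is a single point, and how these points spread out is dictated by whatever pointed tree $sTD$ happens to produce. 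Your remark that one ``bounds the number and diameter of children of each node of $T$ compatibly with $s$'' has the quantifiers backwards: $sTD$ only asserts that \emph{some} pointed subtree exists, and passing to further subtrees can only shrink the set and hence decrease capacity, so no lower bound can be enforced this way. Relatedly, the ``classical observation \ldots that a pointed closed set is Lebesgue non-null'' is false (pointed perfect sets can be null), so the measurability template does not transfer as you describe.

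The arguments that do work obtain the lower bound on $\dim_H(K)$ by a second application of the Lutz--Lutz point-to-set principle rather than by direct capacity estimates. One codes into the cofinal set pairs $\langle x,z\rangle$ with $x\in A$ and $\dim^z(x)>s$ (cofinality follows since, for every oracle $z$, $\sup_{x\in A}\dim^z(x)\ge \dim_H(A)>s$); then, supposing the projected compact set $K$ had dimension $<s$, one takes a minimizing oracle $C$ for $K$ and uses pointedness to find a branch whose \emph{oracle coordinate} dominates $C$, contradicting $\dim^C(x)\ge\dim^z(x)>s$. Arranging the coding so that the oracle coordinate of a sufficiently high-degree branch really does dominate $C$ is exactly the delicate point, and it is absent from your sketch; your cofinality claim for $\tilde A$ is likewise only asserted. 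Finally, the Frostman reduction needs care for arbitrary sets: with inner (compact-approximated) capacity the identity $\dim_H(A)=\sup\{s:C_s(A)>0\}$ \emph{is} the statement being proved, so the reduction is circular unless you work with outer capacity and derive its positivity from the Hausdorff content of open supersets of $A$.
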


The conclusion of Theorem of \ref{theorem: capacity} was also proved by Crone, Fishman and Jackson under the assumption $ZF+AD+DC$. Slaman proved that the conclusion fails for some $\Pi^1_1$ set under the assumption $V=L$. Also the results remain true if Hausdorff dimension is replaced with packing dimension.

Also it can be proved that if $ZFC$ is consistent, the it is consistent with $ZFC$ that the Hausdorff dimension of every $\mathbf{\Sigma}^1_2$-set can be approximated by its closed subsets.

Actually the following result can be proved.
\begin{theorem}
Assume $ZFC+MA+\neg CH$, if $A=\bigcup_{\alpha<\kappa}B_{\alpha}$, where $\kappa<2^{\aleph_0}$, then  $Dim_H(A)=\sup\{Dim_H(B_{\alpha})\mid \alpha<\kappa\}$.
\end{theorem}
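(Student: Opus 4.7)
The inequality $\DH(A) \ge \sup_{\alpha<\kappa}\DH(B_\alpha)$ is immediate from monotonicity of Hausdorff dimension, so all the content lies in the reverse inequality. Put $s_0 = \sup_{\alpha<\kappa}\DH(B_\alpha)$ and fix an arbitrary real $t > s_0$: since $\DH(B_\alpha) < t$ for each $\alpha$, we have $\mathcal{H}^t(B_\alpha) = 0$. It suffices to prove $\mathcal{H}^t(A) = 0$, for then $\DH(A) \le t$, and letting $t \downarrow s_0$ gives the desired bound $\DH(A) \le s_0$. The theorem thus reduces to the following additivity statement, to be established under $ZFC+MA+\neg CH$: \emph{for every $t > 0$ the $\sigma$-ideal $\mathcal N_t$ of $\mathcal H^t$-null subsets of Euclidean space has additivity $2^{\aleph_0}$}, i.e.\ any union of fewer than $2^{\aleph_0}$-many $\mathcal H^t$-null sets is itself $\mathcal H^t$-null.

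I would approach the additivity claim via the classical MA template, following the analogous proof that a union of $<2^{\aleph_0}$ many Lebesgue null sets is null. Fix $\epsilon > 0$ and let $\mathbb P$ be the poset whose conditions are open sets $V$ presented as a countable union of open balls with total $t$-power sum $< \epsilon$, ordered by reverse inclusion $V' \le V$ iff $V' \supseteq V$. For each $\alpha < \kappa$, the set $D_\alpha = \{V \in \mathbb P : B_\alpha \subseteq V\}$ is dense: given $V$ with $t$-power sum $s < \epsilon$, use $\mathcal{H}^t(B_\alpha) = 0$ to cover $B_\alpha$ by open balls with $t$-power sum $< \epsilon - s$ and take the union with $V$. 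Assuming $\mathbb P$ is ccc, MA applied to the $\kappa$-many dense sets $D_\alpha$ yields a filter $G$; the union of its conditions is a countable union of open balls with $t$-power sum $\le \epsilon$ covering $A = \bigcup_\alpha B_\alpha$, so $\mathcal{H}^t(A) \le \epsilon$, and letting $\epsilon \downarrow 0$ gives $\mathcal{H}^t(A) = 0$.

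The main obstacle will be the ccc verification of $\mathbb P$. For Lebesgue measure the analogous forcing is essentially the random algebra, whose ccc is a consequence of Fubini and of $\sigma$-finiteness; for Hausdorff $t$-measure with non-integer $t$, neither property is available on $\mathbb R^n$, so a separate localised argument is needed. A natural approach reduces to $A \subseteq [0,1]^n$ and works with the Hausdorff content $\mathcal H^t_\infty$ --- monotone, subadditive, and finite on bounded sets --- to show that any putative uncountable antichain would produce $\aleph_1$-many sets $V_\alpha$ with $\mathcal H^t_\infty(V_\alpha) < \epsilon$ but $\mathcal H^t_\infty(V_\alpha \cup V_\beta) \ge \epsilon$ for $\alpha \ne \beta$; a Besicovitch-type covering argument combined with a $\Delta$-system refinement should rule out such antichains. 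Should this direct verification fail, the plan can be reorganised around a $\sigma$-centered forcing of finite approximations, with conditions $(F_p, \sigma_p)$ consisting of a finite set $F_p$ of already-committed balls and a finite partial function $\sigma_p\colon\kappa\to\omega$ recording which level of a pre-chosen hierarchy of covers $\mathcal C_\alpha^n$ (with $\sum_k |V_{\alpha,n,k}|^t < 2^{-n}$) has been assigned to each activated $\alpha$, so that $\sigma$-centeredness is automatic and one extracts the required generic cover from the filter in the usual way.
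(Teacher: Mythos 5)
The blog states this theorem without any proof, so there is nothing of the paper's to compare against; here is an assessment of your argument on its own terms. Your reduction to the claim that the $\sigma$-ideal $\mathcal N_t$ of $\mathcal H^t$-null sets has additivity $2^{\aleph_0}$ is correct and is the right first move, and the claim itself is true: one can prove in ZFC that $\mathrm{add}(\mathcal N_t)=\mathrm{add}(\mathcal N)$ for the Lebesgue null ideal $\mathcal N$ (Fremlin, \emph{Measure Theory}, Vol.~5, \S 534; alternatively, redo Bartoszy\'nski's coding of null sets by sequences of finite sets of dyadic cubes, replacing the measure constraint by $\sum_Q |Q|^t<2^{-m}$, and invoke $\mathrm{add}(\mathcal N)=\min(\mathfrak b,\mathrm{add}(\mathrm{Loc}))$), after which MA gives $\mathrm{add}(\mathcal N)=2^{\aleph_0}$. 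But neither of your two proposed proofs of the additivity closes. For the amoeba-style poset, the ccc is precisely the missing step and you do not supply it: the Lebesgue proof of ccc rests on additivity of measure (approximating a condition from inside by a finite union of balls up to a small measure error and using inclusion--exclusion on the union of two conditions), and $\mathcal H^t_\infty$ has no such additivity, so the ``$\Delta$-system plus Besicovitch'' sketch is not an argument. There is also a secondary gap: the filter may contain $\kappa$-many conditions, so ``the union of its conditions is a countable union of balls with $t$-power sum $\le\epsilon$'' needs justification; the standard fix is that every compact subset of the generic open set is contained in a single condition by directedness, combined with inner regularity (capacitability) of $\mathcal H^t_\infty$ on open sets.

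The $\sigma$-centered fallback is not merely incomplete but provably cannot work. Concretely, the generic object it produces includes the entire pre-chosen cover $\mathcal C_\alpha^{n_\alpha}$ for every one of the $\kappa$-many activated indices $\alpha$, hence consists of $\kappa$-many balls with total $t$-power sum $\sum_{\alpha<\kappa}2^{-n_\alpha}$, which is infinite (and the cover uncountable) as soon as $\kappa\ge\aleph_1$; it is not a witness that $\mathcal H^t(A)<\epsilon$. Abstractly, MA for $\sigma$-centered posets is equivalent to $\mathfrak p=2^{\aleph_0}$, and it is consistent that $\mathfrak p=2^{\aleph_0}>\aleph_1$ while $\mathrm{non}(\mathcal N)=\aleph_1$: a finite-support iteration of $\sigma$-centered posets over a model of CH is $\sigma$-centered and adds no random reals, so the ground-model reals remain non-null. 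Since every $\mathcal H^t$-null set is Lebesgue null, a non-null set is non-$\mathcal H^t$-null, whence in such a model $\mathrm{add}(\mathcal N_t)\le\mathrm{non}(\mathcal N_t)\le\mathrm{non}(\mathcal N)=\aleph_1<2^{\aleph_0}$. So no argument using only $\sigma$-centered forcings can yield the additivity you need; whatever poset does the job must be ccc but genuinely non-$\sigma$-centered, which is why the ccc verification cannot be sidestepped and is the real mathematical content here.
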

           \section{Yu: On the Hausdorff dimension of Hamel bases.}
Jack  Lutz proves the following result
\begin{theorem}[Lutz]\label{theorem: lutz hamel bases}
Assume $ZFC+ CH$, the Hausdorff dimension of Hamel bases   range over $(0,1]$.
\end{theorem}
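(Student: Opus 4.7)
Fix $s \in (0,1]$. The plan is to construct, under CH, a Hamel basis $H \subseteq \RR$ of $\RR$ over $\QQ$ with $\DH(H) = s$, via a transfinite recursion of length $\omega_1$ that balances $\QQ$-linear independence, $\QQ$-spanning of $\RR$, and the dimension constraint.

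The first step would be to fix a Borel ``container'' $F \subseteq \RR$ with (a) $\DH(F) = s$, (b) $\mathcal{H}^s(F) > 0$, and (c) $\mathrm{span}_\QQ(F) = \RR$. For every $s \in (0,1]$ such an $F$ exists: take a self-similar Cantor-like set $C_s \subseteq [0,1]$ of Hausdorff dimension $s$ with $\mathcal{H}^s(C_s) > 0$, and put $F = \bigcup_{q \in \QQ}(q + C_s)$. Countable stability yields $\DH(F) = s$, and for a suitable choice of $C_s$ some iterated arithmetic sum $kC_s := C_s + \cdots + C_s$ ($k$ summands) has positive Lebesgue measure (by Marstrand-type sum theorems when $ks > 1$, or by direct construction). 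By Steinhaus, $(kC_s) - (kC_s)$ then contains an interval; since this set is included in $\mathrm{span}_\QQ(F)$, and any $\QQ$-subspace of $\RR$ containing an open interval must equal $\RR$, we conclude $\mathrm{span}_\QQ(F) = \RR$.

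Using CH, enumerate $\RR = \{r_\alpha : \alpha < \omega_1\}$ and $F = \{f_\alpha : \alpha < \omega_1\}$. Build an increasing chain $(H_\alpha)_{\alpha < \omega_1}$ of countable, $\QQ$-linearly independent subsets of $F$ by transfinite recursion. At stage $\alpha$ set $V_\alpha := \mathrm{span}_\QQ(H_\alpha)$; if $r_\alpha \notin V_\alpha$, use (c) to write $r_\alpha = \sum_i q_i g_i$ with $g_i \in F$ and adjoin to $H$ a maximal $\QQ$-independent (modulo $V_\alpha$) subfamily of the $g_i$; afterwards, if $f_\alpha \notin V_{\alpha+1}$, adjoin $f_\alpha$ as well. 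Take unions at limit stages. The resulting $H := \bigcup_\alpha H_\alpha$ is $\QQ$-linearly independent, $\QQ$-spans $\RR$, and is contained in $F$. In particular $\DH(H) \le \DH(F) = s$, settling the upper bound.

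The matching lower bound $\DH(H) \ge s$ is the main difficulty. The plan is to fix a Frostman probability measure $\mu$ supported on a compact subset of $F$ with finite $t$-energy for every $t < s$, and to prove $\mu(H) > 0$; the mass distribution principle then delivers $\DH(H) \ge s$. Here $F \setminus H = \{f_\alpha : f_\alpha \in V_\alpha\}$ is an uncountable union, indexed by $\alpha < \omega_1$, of points drawn from the countable sets $V_\alpha \cap F$. Individually each $V_\alpha \cap F$ is $\mu$-null (as $\mu$ is atomless and $V_\alpha$ is countable), but an uncountable union of $\mu$-null sets need not be $\mu$-null, and this is the step I expect to be the hardest. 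The plan to handle it is to use CH to arrange the enumeration of $F$ so that $f_\alpha$ is ``generic'' with respect to $V_\alpha$ at $\mu$-almost every stage, ensuring $\mu(F \setminus H) = 0$. An alternative route, which I suspect is closer to Lutz's original argument, is to invoke the point-to-set principle for effective Hausdorff dimension: choose an oracle $A$ and arrange the recursion so that every $h \in H$ has effective dimension at most $s$ relative to $A$ (yielding $\DH(H) \le s$), while some $h \in H$ has effective dimension exactly $s$ (yielding the lower bound).
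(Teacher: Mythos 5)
Your upper-bound half is sound: a Borel container $F=\bigcup_{q\in\QQ}(q+C_s)$ with $\mathrm{Dim_H}(F)=s$ and $\mathrm{span}_\QQ(F)=\RR$ (via a sumset/Steinhaus argument), followed by a length-$\omega_1$ recursion producing a Hamel basis $H\subseteq F$, correctly gives $\mathrm{Dim_H}(H)\le s$. But the lower bound, which you yourself flag as the crux, is genuinely missing, and both of your proposed repairs are defective as stated. For the Frostman route, the condition you need is not $\mu(F\setminus H)=0$ (the complement is non-measurable, and you cannot control which $f_\alpha$ get rejected, since the spanning requirements for the $r_\alpha$ force specific elements into $V_{\alpha+1}$); what you need is positive \emph{outer} measure $\mu^*(H)>0$, i.e., that $H$ is not covered by any Borel $\mu$-null set. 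The correct way to arrange this under CH is a further diagonalization: enumerate in type $\omega_1$ all closed subsets of $F$ of positive $\mu$-measure, and at stage $\alpha$ insert into $H$ a point of the $\alpha$-th such set lying outside $\mathrm{span}_\QQ(H_\alpha)$ (possible since that span is countable and the closed set is uncountable). Then $H$ meets every co-null Borel set, so $\mu^*(H)>0$ and the mass distribution principle gives $\mathrm{Dim_H}(H)\ge s$. You identified the obstacle but not this fix.

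Your second route is wrong as written. The point-to-set principle is $\mathrm{Dim_H}(E)=\min_{A}\sup_{x\in E}\dim^A(x)$, a minimum over \emph{all} oracles; producing a single $h\in H$ with $\dim^A(h)=s$ for one fixed oracle $A$ yields nothing about $\mathrm{Dim_H}(H)$ (every singleton has classical dimension $0$ despite having positive effective dimension relative to suitable oracles). To get $\mathrm{Dim_H}(H)\ge s$ you must guarantee that for \emph{every} oracle $B$ there is some $h\in H$ with $\dim^B(h)\ge s$ — continuum many requirements, which is precisely where CH and the length-$\omega_1$ recursion earn their keep. For calibration: the paper does not actually prove Lutz's CH theorem; it only records a very sketchy argument for the stronger Qi--Yu ZFC version, resting on the point-to-set principle together with the fact that every real is a sum of two reals of effective Hausdorff dimension $0$ (which handles spanning while keeping dimension down, playing the role of your sumset container). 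So your container construction is a legitimate classical alternative for the upper bound, but the proof is incomplete until the lower bound is carried out along one of the two corrected lines above.
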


Then Renrui Qi  and I independently observed that  Theorem \ref{theorem: lutz hamel bases} can be proved within $ZFC$.

\begin{theorem}[Qi and   Yu]
It is a $ZFC$ theorem that the Hausdorff dimension of Hamel bases   range over $[0,1]$.
\end{theorem}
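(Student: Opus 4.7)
The plan is to exhibit, for each $s\in[0,1]$, a Hamel basis $H_s\subseteq\mathbb{R}$ of Hausdorff dimension exactly $s$, using two ingredients combined via the Axiom of Choice alone, with no appeal to $\mathrm{CH}$. The key observation is that the set $L$ of Liouville numbers is a dense $G_\delta$ (hence comeager) subset of $\mathbb{R}$ with $\dim_H(L)=0$, and yet $L-L=\mathbb{R}$: for every $x\in\mathbb{R}$ both $L$ and $x-L$ are comeager, so they meet by the Baire Category Theorem. Therefore $\mathrm{span}_\mathbb{Q}(L)=\mathbb{R}$, and by Zorn's Lemma $L$ contains a Hamel basis $H_0$; monotonicity of Hausdorff dimension gives $\dim_H(H_0)\le\dim_H(L)=0$, which handles $s=0$.

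For $s\in(0,1]$, the plan has two steps. First, construct a compact $\mathbb{Q}$-linearly independent set $C_s\subseteq\mathbb{R}$ with $\dim_H(C_s)=s$. I would build $C_s$ as a perturbed self-similar Cantor set: start from a standard Cantor construction with contraction ratios tuned to give dimension $s$ (using non-constant ratios if $s=1$), then at each refinement stage shift the sub-intervals by arbitrarily small amounts to rule out the countably many nontrivial rational linear relations among points of $C_s$. Each such relation carves out a proper affine subvariety in some $\mathbb{R}^k$ and so can be avoided by a generic small perturbation; summability of the perturbations across stages preserves both compactness and the target dimension $s$. Second, extend $C_s$ to a Hamel basis by adjoining Liouville numbers. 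Setting $V=\mathrm{span}_\mathbb{Q}(C_s)$, the image of $L$ in the quotient $\mathbb{Q}$-vector space $\mathbb{R}/V$ still spans (because $\mathrm{span}_\mathbb{Q}(L)=\mathbb{R}$), so by Zorn's Lemma there exists $C'\subseteq L$ whose image in $\mathbb{R}/V$ is a Hamel basis of $\mathbb{R}/V$. Then $H_s:=C_s\cup C'$ is a Hamel basis of $\mathbb{R}$, and finite stability of Hausdorff dimension gives $\dim_H(H_s)=\max(\dim_H(C_s),\dim_H(C'))=\max(s,0)=s$.

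The main obstacle I anticipate is the dimension-sensitive Mycielski-style construction of $C_s$: one must simultaneously achieve $\mathbb{Q}$-linear independence (countably many generic conditions) and a prescribed Hausdorff dimension (a geometric constraint on the Cantor refinement). The balance is delicate but standard: careful bookkeeping of perturbations, summably small so they do not disturb the contraction ratios yet large enough at each stage to break the currently listed rational relation, should suffice, paralleling the classical proof that every perfect set contains a perfect linearly independent subset. The whole argument uses only Zorn's Lemma (for extracting bases from spanning sets in $\mathbb{R}$ and $\mathbb{R}/V$) together with elementary fractal geometry, so no cardinal arithmetic hypothesis enters, making the statement a $\mathrm{ZFC}$ theorem and removing Lutz's $\mathrm{CH}$ assumption; moreover this extends his range $(0,1]$ down to $0$ by the Liouville construction.
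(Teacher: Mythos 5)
Your proof is correct, but it goes by a genuinely different route than the paper's. The paper works on the effective side: its key point is that every real $r$ decomposes as $x_0+x_1$ with both summands of effective Hausdorff dimension $0$ (the algorithmic analogue of Erd\H{o}s's theorem that every real is a sum of two Liouville numbers), which lets a transfinite recursion of length continuum absorb arbitrary reals into the span while keeping every basis element of small relativized effective dimension; the classical Hausdorff dimension of the resulting basis is then read off from the Lutz--Lutz point-to-set principle. You stay entirely classical: the Liouville numbers themselves serve as the dimension-zero spanning set (your Baire-category argument is exactly Erd\H{o}s's proof), a Mycielski-style perturbed Cantor construction supplies a compact $\mathbb{Q}$-independent set $C_s$ of any prescribed dimension $s\in(0,1]$, and Zorn's Lemma glues the pieces together. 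Your route avoids computability theory altogether and, for $s>0$, produces a Hamel basis whose dimension is witnessed by a compact subset, a pleasant bonus given the $F_\sigma$ approximation theme of the preceding section; the paper's route is the more direct removal of CH from Lutz's original argument and sidesteps the one genuinely delicate step in yours, namely the simultaneous control of $\mathbb{Q}$-independence and Hausdorff dimension in the construction of $C_s$. On that step, make sure each rational relation is treated at every sufficiently large level of the Cantor scheme (two of the $k$ points in a putative relation may still share a basic interval at the level where the relation first appears in your enumeration), and that the perturbations are small enough to preserve the gap estimates underlying the mass-distribution lower bound; both points are standard, so the proposal stands as a complete ZFC proof.
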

\begin{proof}(very sketchy)
The proof is basically a straightforward diagonization. The point is that for any real $r$, there are two reals $x_0$ and $x_1$ with effective Hausdorff dimension $0$ so that $r=x_0+x_1$. Then we may apply Lutz-Lutz's point-to-set theorem to construct a Hamel base with Hausdorff dimension $s$ for any given $s\in [0,1]$.
\end{proof}
          
   
 \part{Mathematical logic and quantum mechanics}
 
   \section{Nies: The spectral gap problem for  spin chains}
The following is adapted from the first part of a talk of Nies at the  LQCAI conference, \url{https://lqcai.org/},  at Arak University in Iran, July. 
This part of the talk   reviewed   the  result of Cubitt, Perez-Garcia and Wolf~\cite{Cubitt.etal:15},  showing that  the existence of a spectral gap is undecidable for the two-dimensional (2D) case. The  improvement  by Bausch, Cubitt, Lucia and Perez-Garcia~\cite{Bausch.etal:20}  to the one-dimensional  (1D) case was also mentioned.

 The second part of the talk considered  work (much less known) with    Volkher Scholz~\cite{Nies.Scholz:18}, where we   extend  the notion of Martin-L\"of randomness from the setting of  infinite bit sequences  to  the setting of  infinite spin chains of qubits. 
We showed that there is a universal algorithmic test for randomness, and worked towards a characterisation of this randomness notion via incompressibility of the initial segments, similar to the Levin-Schnorr theorem.   

  A    PhD thesis by Tejas Bhojraj at Univ.\ of  Madison~\cite{Bhojraj:21} on the topic appeared in 2021. It is   available at \url{arxiv.org/pdf/2106.14280.pdf}. There are  three corresponding  publications by the same author (J.\ Math.\ Physics, Theor.\ Computer Sc., ENTCS).   He's recently moved to study  at CMI in Chennai.
  
  To find connections between the two approaches to spin chains, one would have to first formulate a version of the Bausch et al.\ results for infinite  chains of qudits.  Hamiltonians have been studied in this infinitary setting, but they are usually not bounded (i.e., continuous) as operators. Also, they are  only defined on a dense Hilbert subspace. For more detail search the  papers  and books by Nachtergaele, Naaijkens~\cite{Naaijkens:17}, or Sims. Also see the work on quantum dynamical systems by Bjelakovic et al.~\cite{Bjelakovic.etal:04}.

\subsection{Background on spin chains} Spin chains were introduced in physics in the 1920s, mainly as a model for  magnetism. A classical  spin chain consists of  $N$ dipoles arranged linearly:
\bc  $\underbrace{\uparrow \ \downarrow \ \downarrow \ \uparrow \ \ldots \ \uparrow}_N $ \ec

The  positions $i=1, \ldots, N$ in a spin chain    are called \emph{sites}. 
 The energy of a state of the system is given by a Hamiltonian.  The 1D  Ising model is due to Lenz (1920), and was ``solved" by his student Ising in his thesis (1925).
For $N$ sites, the Hamiltonian is 
\bc  $H_N= -J \sum_{i=1}^{N-1} \sss_i \sss_{i+1} -h \sum_{j=1}^N \sss_j$,  \ec where  
  \bi \item $J$ is the interaction strength between neighbours, \item$h$ is the strength of the external magnetic field, \item 
    $\sss_i = 1$ for a  $\uparrow$ at site $i$, and  $\sss_i = -1$ for a  $\downarrow$    at site $i$. 
\ei
      Higher-dimensional arrangements of dipoles have also be studied, in particular  square lattices, which can be pictured as follows: 
  \bc  \scalebox{.6}{\includegraphics{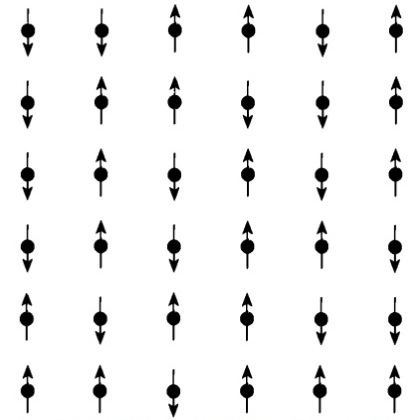}} \ec


Heisenberg (1928)  introduced a better model of magnetism that works in    the quantum setting. He used 
  $n$-chains as before, where each  site now contains a spin $1/2$ particle (a Fermion, e.g., electron).  
The state of such a system is a  unit vector in $(\mathbb C^2)^{\otimes n}$.  
  Spins are modelled  in $x,y,z$ directions, corresponding to observables given by the Pauli matrices $\sss^x, \sss^y, \sss^z $ (certain well-known $2 \times 2$ matrices over $\mathbb C$ which together with $\mathbb I_2$ form a basis over $\mathbb R$ for the Hermitian matrices).    

  Physicists write $\vec \sss = (\sss^x, \sss^y, \sss^z) $ and   
 \bc $\vec \sss_k= \mathbb I \otimes \ldots \otimes \mathbb I  \otimes \vec \sss \otimes  \mathbb I  \otimes \ldots \otimes  \mathbb I $, \ec where $\mathbb I = \left(\begin{matrix} 1 & 0 \\ 0& 1\end{matrix}\right)$ , and  the $\vec \sss$ is in position $k$.  
   
   The Hamiltonian is now a Hermitian operator on $(\mathbb C^2)^{\otimes n}$:
\bc   $H= \sum_{i=1}^{n-1} h^{(2)}_{i,i+1}$  where    $h^{(2)}_{i,i+1}= \frac J 4 (\vec \sss_i \cdot \vec \sss_{i+1} -\mathbb I^{\otimes n})$. \ec 
 $J \in \mathbb R$ is a coupling constant, and the local Hamiltonians 
 $h^{(2)}_{i,i+1}$  describe the interaction of neighbouring sites.   Note that this only depends on $i$ as far as the subspace the operator acts on nontrivially is concerned. The action on that subspace remains the same for each $i$.


  \subsection*{Abstract spin chains}
  For $d\ge 2$ (sometimes suppressed), a qu\emph{d}it is a unit vector in $d$-dimensional Hilbert space $\mathbb C^d$.    
  An \emph{abstract  spin chain} is a  system of $n$ qudits,   arranged linearly. The positions are referred to as \emph{sites}. 
  The state of such a system is given by a vector in the $d^n$-dimensional Hilbert space $(\mathbb C^d)^{\otimes n}$.      
 One also considers higher dimensional arrangements of qudits, e.g.\ square lattices.



  A Nature paper of      Cubitt, Perez-Garcia and Wolf~\cite{Cubitt.etal:15} showed that  whether there is a spectral gap is undecidable for the square lattice (2D) case.  
    The full proof has last been updated on arXiv in July  2020 (1502.04573v4).

Bausch, Cubitt, Lucia and Perez-Garcia~\cite{Bausch.etal:20} showed that the existence of a spectral gap is undecidable  for  the spin  chain (1D) case.

 As in the case of the  Ising and Heisenberg chains, the behaviour of    an abstract  spin chain is described by local Hamiltonians.   $M_n(\mathbb C)$ denotes the algebra of $n\times n$ complex matrices.
  Let $h^{(1)}\in M_d(\mathbb C)$ and $h^{(2)}\in M_{d^2}(\mathbb C)$ be   Hermitian matrices, where 
\bi \item $h^{(1)}$ describes the one-site ``interactions", and 
  
 \item  $h^{(2)}$ describes the nearest-neighbour interactions. \ei

      The  global Hamiltonian of a spin chain of $n$ qudits is   given by shifting and adding up these interactions as the indices vary:
 \[H_n= \sum_{i=1}^n h^{(1)}_i + \sum_{i=1}^{n-1} h^{(2)}_{i,i+1}.\]  
%


  \subsection*{Spectral gap}
The spectral gap of a Hamiltonian $H$ acting on a finite-dimensional Hilbert space is $\Delta(H)=\gamma_1(H)-\gamma_0(H)$, the difference between its least two eigenvalues.

 $\seq{H_n}\sN n$ will always denote a sequence such that $H_n$ is a    Hamiltonian on   the $d^n$-dimensional Hilbert space.     
   The   asymptotic spectral gap of such a sequence  can be defined as  \bc $\Delta\seq{H_n}=\liminf_n \Delta(H_n).$\ec    
  (Note that     the ground energy $\lambda_0(H_n) $ typically   increases with~$n$.) Naively  one would think  that 
  the  system is called gapped if $\Delta\seq{H_n}$ is positive, and gapless otherwise.  
Cubitt et al.~\cite{Cubitt.etal:15} and then Bausch et al.~\cite{Bausch.etal:20} use   definitions making both the gapped and the gapless case more restricted, so that some  sequences have neither property.

  \begin{definition}     $\seq{H_n}$ is \emph{gapped} if   $\Delta \seq {H_n}=\liminf_n \Delta(H_n)$ is positive, and moreover,     for sufficiently large $n$, the least eigenvalue $\lambda_0(H_n)$ is non-degenerate, i.e.\ its eigenspace has dimension $1$.    \end{definition}

The second condition means that there is a unique ground state of the system (up to phase).

  \begin{definition}       $\seq{H_n}$ is \emph{gapless} if there is some $c>0$ such that for each $\varepsilon >0$,  for sufficiently large $n$, each point in  the interval $[\lambda_0(H_n), \lambda_0(H_n)+c]$ is $\varepsilon$-close to some eigenvalue of $H_n$.    \end{definition}


    
    The following figure {taken from Cubitt et al.~\cite{Cubitt.etal:15}  demonstrates the two cases in the thermodynamic limit. 
\bc \scalebox{.4}{\includegraphics{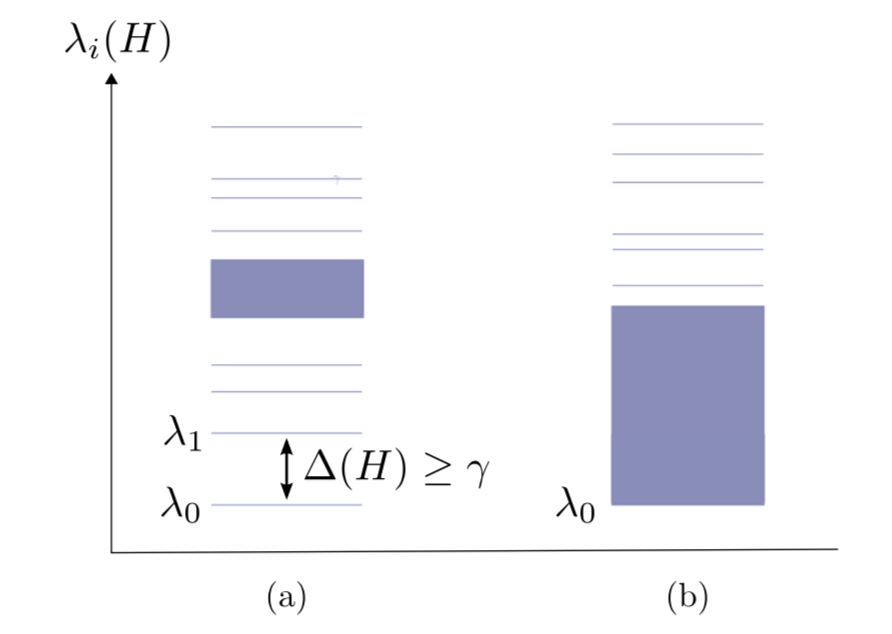}}\ec

In  the 1-dimensional case, whether there is a spectral gap was    shown to be undecidable by Bausch et al.~\cite{Bausch.etal:20}. 
  Given a Turing machine $M$,  they determine a (large)  dimension $d$.  
Then, given an   input $\eta \in \NN$ to $M$ they compute local Hamiltonians  $h^{(1)}\in M_d(\mathbb C)$ and $h^{(2)}\in M_{d^2}(\mathbb C)$ as above such that \bi \item  if  $M(\eta)$ halts then the sequence  $\seq{H_n(\eta) }\sN n$  (defined as above by shifting the local interactions) is gapless,  \item  otherwise the sequence $\seq{H_n(\eta)}\sN n$ is gapped.\ei

  They rely on the methods Cubitt et al.~\cite{Cubitt.etal:15}   who had shown earlier on  that the spectral gap problem is undecidable in the 2D case, using    square lattices of qudits. The definitions, in particular the Hamiltonians in the 2D case, are similar to the ones given here. However,  there are two types of nearest-neighbour interactions, corresponding to rows and columns.     
 Interestingly,  in the 2D case the relationship between machines and Hamiltonians is   the other way round:   if  $M(\eta)$ halts then the sequence is gapped, else gapless.

 The entries of the Hamiltonians are easy ``complex" numbers:    Let $F$ be the subring of $\mathbb C$ generated by   $\QQ \cup \{\sqrt 2\} \cup \{ \exp( 2 \pi i \theta) \colon \theta \in \QQ\}$.   The entries of the local Hamiltonians, and hence of the $H_n(\eta)$,  are all in $F$.   So the undecidability of the spectral gap is not an artefact of the  well-known fact that equality of  two computable reals is undecidable.

Here are some further elements of the proofs.  
The 2D case 
relies on  quantum Turing machines (Bernstein and Vazirani), and  the  history state  Hamiltonian due to    Feynman,    then Kitaev, then Gottesman and Irani (FOCS 2013):
The ground state of such a Hamiltonian  encodes the whole computation  of a QTM up to a stage $T$.  
Note that  the QTM is not related to $M$; rather,  it is related to the    phase estimation algorithm (see e.g. \cite{Nielsen.Chuang:02}).    
  Quasi-periodic Wang tilings  due to Robinson (Inventiones, 1971) also play an important  role.    
In  the 1D-case   the Wang tiling (which needed the second spatial dimension in the lattice setting) is replaced by  ``marker Hamiltonians".
There are various articles available providing an overview of the proof in the 2D case, e.g. the Nature paper~\cite{Cubitt.etal:15} itself, and also  \cite{Kreinovich:17}.

%

%

\def\cprime{$'$} \def\cprime{$'$}

%
%

\end{document}